    \newtheorem{theorem}{Theorem}[section]
    \newtheorem{proposition}[theorem]{Proposition}
    \newtheorem{corollary}[theorem]{Corollary}
    \newtheorem{lemma}[theorem]{Lemma}
    \newtheorem{remark}[theorem]{Remark}
    \def\qed{\hfill $\Box$\medskip}
    \def\bM{{\mathbb M}}
    \def\RR{\mathbb{R}}
    \def\KK{{\mathbb{K}}}
    \def\HH{{\mathbb{H}}}
    \def\FF{{\mathbb F}}
    \def\CC{{\mathbb C}}
    \def\A{{\mathcal A}}
    \def\RR{{\mathbb R}}
\begin{document}
    \openup 1\jot

    \title{Birkhoff-James classification of finite-dimensional $C^*$-algebras}

 \author[B. Kuzma]{Bojan Kuzma}
    \address{University of Primorska, Glagolja\v{s}ka 8, SI-6000 Koper, Slovenia, and
    Institute of Mathematics, Physics, and Mechanics, Jadranska 19, SI-1000 Ljubljana, Slovenia.}
     \email{bojan.kuzma@upr.si}

    \author[S. Singla]{Sushil Singla}
    \address{University of Primorska, Glagolja\v{s}ka 8, SI-6000 Koper, Slovenia,}
     \email{ss774@snu.edu.in}

    \keywords{Real finite-dimensional $C^*$-algebras; Birhoff-James orthogonality; Isomorphism.}
    \subjclass{Primary 46L05, 46B80; Secondary 46B20}

 \thanks{This work is
    supported in part by the Slovenian Research Agency (research program P1-0285 and research projects N1-0210, N1-0296 and J1-50000). }

    \begin{abstract}
        We classify real or complex finite-dimensional $C^*$-algebras and their underlying fields 
        from the 
        properties of 
        Birkhoff-James orthogonality. 
        Application to strong Birkhoff-James orthogonality preservers 
        is also given.
    \end{abstract}

    \maketitle

    \section{Introduction}
    Two vectors $u,v$ in a Hilbert space are orthogonal if their inner product is zero. This is equivalent to the fact that the distance from  $u$ to the one-dimensional subspace spanned by $v$ is attained at $0$. We can then
         define orthogonality in general normed spaces over the field $\FF\in\{\RR,\CC\}$  by
    $$u\perp v\quad\hbox{if}\quad \|u+\lambda v\|\ge \|u\|\text{ for all }\lambda \in \FF.$$ This relation, which was proposed by Birkhoff \cite{Birkhoff} and further developed by James \cite{james}, is known as \textit{Birkhoff-James (BJ for short) orthogonality}. It turns out that BJ orthogonality knows a lot about the norm. For example, James \cite[Theorem 1]{james}, by using the Kakutani-Bohnenblust characterization of inner-product spaces  \cite[Theorem 3]{Kakutani} and \cite{Bohneblust}   (see also a book by Amir~\cite{amir}), states that a normed space $V$ of dimension greater or equal to three is an inner product space if and only if BJ orthogonality is symmetric. See also the recent results \cite{Aram, guterman, simple, abelian} for more.

    Our main aim  is to
         show that BJ orthogonality knows everything about (real or complex) finite-dimensional $C^\ast$-algebras.      Namely, as we show in our main result below, two finite-dimensional $C^\ast$-algebras $\A_1$ and $\A_2$  are \emph{BJ-isomorphic} (i.e., there exists a bijection $\phi\colon\A_1\to\mathcal A_2$ with      $u\perp v\Longleftrightarrow\phi(u
    )\perp\phi(v)$) if and only if their underlying fields are the same and
         they are $\ast$-isomorphic.
         This is in spirit of Tanaka~\cite{tanaka3} who      characterized complex abelian  $C^*$-algebras among all (finite or infinite-dimensional) complex $C^*$-algebras in terms of geometric structure (which was first defined from BJ orthogonality \cite{tanaka1}). We have been informed by Tanaka \cite{private} that he has recently also found a similar classification within  finite-dimensional complex $C^*$-algebras.            We refer to a book by Goodearl~\cite{goodearl} for more on the theory of  real $C^*$-algebras, which  are similar to the complex ones.

    \begin{theorem}\label{maintheorem1}Suppose  $C^*$-algebras $\A_1$ and $\A_2$,  over the fields $\FF_1$ and $\FF_2$,
         are BJ-isomorphic. If $\A_1$ is finite-dimensional with $\dim\A_1\geq 2$, then:      \begin{enumerate}
        \item  $\FF_1=\FF_2$,
        \item $\A_1$ and $\A_2$ are isomorphic as $C^\ast$-algebras.
    \end{enumerate}
    \end{theorem}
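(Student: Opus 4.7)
My plan is a three-stage reduction: (i) upgrade the set-theoretic BJ-isomorphism $\phi$ to a real-linear isometry $T\colon\A_1\to\A_2$, (ii) deduce that the underlying fields $\FF_1$ and $\FF_2$ must coincide, and (iii) apply the known classification of $C^*$-algebra isometries to extract a $*$-isomorphism. Throughout I would view both algebras as real normed spaces when convenient, since $\phi$ is only assumed to be a set bijection, with no regularity whatsoever.

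For step (i), introduce on $\A_i\setminus\{0\}$ the BJ-equivalence relation $u\sim v$ defined by $u^\perp = v^\perp$ and ${}^\perp u = {}^\perp v$. Because $\phi$ preserves each one-sided BJ-orthogonality set in both directions, it preserves $\sim$ as well. In a finite-dimensional $C^*$-algebra the smooth points (those admitting a unique norming functional) form a dense subset, and for such $u$ the class $[u]$ coincides with the scalar orbit $\FF_i^* u$. Hence $\phi$ descends to a bijection of projectivizations $\PP(\A_1)\to\PP(\A_2)$. Standard Birkhoff--James preserver machinery, adapted from the work cited in the introduction, can then be used to promote this projective bijection to a bona fide real-linear isometry $T$ agreeing with $\phi$ up to a nonzero scalar on each orbit.

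Step (ii) requires extracting $\FF_i$ from isometric invariants of $\A_i$. Minimal central projections, and hence the center $Z(\A_i)$ and the block decomposition into simple summands, are recoverable from the norm (they sit among extreme points of the self-adjoint unit ball with controlled spectrum). On each simple summand, the underlying field appears as the largest subfield of real-linear endomorphisms acting by isometric multiplications and commuting with the ambient $C^*$-action; this is an intrinsic invariant preserved by the isometry $T$, forcing $\FF_1\cong\FF_2$. The dimension hypothesis $\dim\A_1\ge 2$ is crucial here since otherwise $\A_1=\CC$ (viewed over $\CC$) would have trivial Birkhoff--James orthogonality and be BJ-isomorphic to an arbitrary set of the same cardinality.

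Step (iii) is then essentially classical: with $T\colon\A_1\to\A_2$ a real-linear isometry between $C^*$-algebras over a common field $\FF$, Kadison's theorem (together with its real-case extensions by Wright and Youngson) writes $T(a)=u\,J(a)$ for some unitary $u\in\A_2$ and a Jordan $*$-isomorphism $J$; decomposing via central projections and noting that the transpose on each simple summand $M_n(D)$, $D\in\{\RR,\CC,\HH\}$, is a $*$-anti-isomorphism which is conjugate via a $*$-isomorphism to $M_n(D)$, one obtains $\A_1\cong\A_2$ as $C^*$-algebras. The principal obstacle in the whole argument is clearly step (i): upgrading a bare set-bijection to a real-linear isometry without any continuity or additivity hypothesis, and \emph{without} knowing a priori that $\FF_1=\FF_2$. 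The fallback is to carry out all the work purely in the real-linear category on $\A_i$, extracting the full $\FF_i$-scalar action only at the end from the recovered $C^*$-structure.
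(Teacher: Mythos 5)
Your step (i) is where the argument breaks down, and unfortunately it is the entire content of the problem. A BJ-isomorphism is a bare set bijection; the only tools that exist for promoting orthogonality preservers to isometries (Blanco--Turn\v{s}ek, Koldobsky) take \emph{linearity as a hypothesis}, and there is no ``standard Birkhoff--James preserver machinery'' that manufactures additivity or homogeneity out of nothing. The whole point of the program this paper belongs to (Tanaka, Aramba\v{s}i\'c et al., and the companion papers \cite{simple,abelian}) is that one must reconstruct the algebra from the combinatorics of the relation $\perp$ without ever linearizing $\phi$; the cited failure of (1) in dimension one already warns that BJ orthogonality can forget the norm entirely in degenerate situations. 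Your preliminary reduction is also shaky on its own terms: for a smooth point such as $u=\diag(1,\tfrac12)\in\mathcal M_2(\CC)$ one has $u^\bot=(e_1e_1^\ast)^\bot$, so the class cut out by outgoing neighbourhoods alone is much larger than $\FF^\ast u$, and showing that adding the incoming-neighbourhood condition repairs this requires a genuine argument you do not supply; nor does smoothness on a dense set by itself give you a well-defined map on the full projectivization. Steps (ii) and (iii) are downstream of this gap (and step (ii) additionally speaks of invariants ``recoverable from the norm,'' which you do not yet have).

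By contrast, the paper never produces an isometry. It isolates, purely in terms of $\perp$, the pseudo-abelian summand $\mathcal L_\A^{\bot\bot}$ and the nonpseudo-abelian summand $\mathcal L_\A^{\bot}$, and then runs a recursive procedure (Lemma \ref{procedure}) on minimal families of smooth points whose common outgoing neighbourhood contains a nonzero left-symmetric element; this exhibits the annihilator of a single matrix block $0\oplus\mathcal M_{n_j}(\KK_j)\oplus 0$ as a BJ-invariant, after which the already-established BJ-classification of simple blocks \cite{simple} and of pseudo-abelian algebras \cite{abelian} delivers both $\FF_1=\FF_2$ and the $\ast$-isomorphism block by block. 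If you want to salvage your outline you must replace step (i) by such a combinatorial reconstruction; once that is done, Kadison's theorem and its real analogues are no longer needed, and in any case invoking them presupposes exactly the (real versus complex) linear structure whose identification is part of what is to be proved.
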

    Note that (1) may fail if $\dim\A_1=1$, see~\cite[Example 2.2]{simple}.

    \section{Proof of main theorem}\label{generallabel}

    Let $\mathcal A$ be a finite-dimensional $C^*$-algebra over the field $\mathbb F\in\{\mathbb R, \mathbb C\}$. We will denote the matrix block decomposition of a complex $C^*$-algebra $\mathcal A$ by
    \begin{equation}\label{eq:matrixdecompositionC}
    \mathcal M_{n_1}(\mathbb C)\oplus\dots\oplus\mathcal M_{n_\ell}(\mathbb C)
    \end{equation}
    for some positive integers $n_1, \dots, n_\ell$. Similarly, the matrix block decomposition of a real $C^*$-algebra $\mathcal A$ will be denoted by
    \begin{equation}\label{eq:matrixdecompositionR}
    \mathcal M_{n_1}(\mathbb K_1)\oplus\dots\oplus\mathcal M_{n_\ell}(\mathbb K_\ell)
    \end{equation}
     where $\mathbb K_i\in\{\mathbb R, \mathbb C, \mathbb H\}$       (every~$\A$ has such decomposition, see a book by Goodearl~\cite{goodearl} for more information). If needed we will regard ${\mathcal M}_n(\KK)$ as the set of matrices corresponding to $\KK$-linear operators acting on a right $\KK$-vector space of column vectors $\KK^n$ and with respect to the standard basis $e_1,\dots,e_n$. The standard  pairing of vectors  $x,y\in\KK^n$ will be denoted by
    $$\langle x,y\rangle_{\KK}:= y^\ast x,$$
    where a row vector $y^\ast$ is a conjugated transpose of $y$. Note that $\KK^n$ is an Euclidean space under the $\FF$-linear inner product
    $$\langle x,y\rangle_{\FF}:=\begin{cases} \mathop{\mathrm{Re }} y^*x;&\FF=\RR\\y^*x;&\FF=\CC\end{cases}.$$
                    For a vector $v$ in a normed space~$V$, $$v^\bot=\{u;\; v\perp u\}\quad\text{ and }\quad {}^\bot v=\{u;\; u\perp v\},$$ are the \emph{outgoing} and \emph{incoming neighbourhood} of $v$, respectively.  Also, $v$ is  \textit{left-symmetric}  if $v\perp x$ implies $x\perp v$, or equivalently, if  $v^\bot\subseteq {}^\bot v$. For $\mathcal S\subseteq V$, let
    $$\mathcal L_{\mathcal S}:=\{v\in\mathcal S;\;\;v^\bot	\cap {\mathcal S}\subseteq {}^\bot v\cap {\mathcal S}\}.$$
    denote the set of all left-symmetric vectors relative to $\mathcal S$.
        In particular, if $\mathcal S=V$, then $\mathcal L_{V}$ is the set of all left-symmetric vectors. 
        It follows from \cite[Theorem 1.2]{abelian} that
        $\mathcal L_{\mathcal A}^\bot:=\bigcap_{v\in \mathcal L_{\mathcal A}}v^\bot$ (the \textit{nonpseudo-abelian summand} of a $C^*$-algebra $\A$) is the sum of $n_i$-by-$n_i$ blocks with $n_i>1$,  while $\mathcal L_{\mathcal A}^{\bot\bot}:=\bigcap_{v\in \mathcal L_{\mathcal A}^\bot} v^\bot$
        (the \textit{pseudo-abelian summand} of $\A$) is the sum of all $1$-by-$1$ blocks in \eqref{eq:matrixdecompositionC}--\eqref{eq:matrixdecompositionR}. 
      Observe that $\mathcal L_{\mathcal A}^{\bot\bot}$
        is  a  $C^\ast$-algebra; abelian if and only if there are no quaternionic $1$-by-$1$ blocks. We say $\A$ is \textit{pseudo-abelian} if $\A=\mathcal L_{\mathcal A}^{\bot\bot}$; and \textit{not pseudo-abelian} if $\A\neq \mathcal L_{\mathcal A}^{\bot\bot}$.
       
       For $A=\bigoplus_{k=1}^\ell A_k\in\A$, we define $$M_0(A) =\{x \in \oplus_{k=1}^\ell\mathbb K_k^{n_k} ;\;\; \|Ax\|=\|A\|\cdot\|x\|\}.$$ Note that $M_0(A_k):=M_0(A)\cap(0\oplus \mathbb K_k^{n_k}\oplus 0)$ is a $\mathbb K_k$ vector subspace, see \cite[Lemma 3.1]{simple}. We will rely on the following Stampfli-Magajna-Bhatia-\v Semrl classification of BJ orthogonality in $C^*$ norm.
    
        \begin{proposition}[\cite{abelian}, Lemma 2.1]\label{prop:M_0(A)}
        Let $\A=\bigoplus_{k=1}^\ell\mathcal M_{n_k}(\mathbb K_k)$ be a $C^*$-algebra over~$\FF$ and  $A,B\in\A$. Then, $B\in A^\bot$ if and only if $\langle Ax,Bx\rangle_{\FF} =0$ for some  normalized  $x\in M_0(A)$. Moreover,  if  $\|A_i\|<\max\{\|A_k\|;\;\;1\leq k\leq\ell\}=\|A\|$, then $$A^\bot = \big(A_1\oplus\dots\oplus A_{i-1}\oplus 0_{n_i}\oplus A_{i+1}\oplus\dots\oplus A_\ell\big)^\bot.$$
    \end{proposition}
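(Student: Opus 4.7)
The plan is to split into the two implications, dispose of the easy direction first, and tackle the converse via a convex-analytic/numerical-range argument. For $(\Leftarrow)$, take a unit vector $x \in M_0(A)$ with $\langle Ax, Bx\rangle_\FF = 0$; expanding $\|(A+\lambda B)x\|^2 = \|Ax\|^2 + |\lambda|^2\|Bx\|^2$ (the cross term vanishes by hypothesis, in all three scalar choices $\KK\in\{\RR,\CC,\HH\}$, because it equals $2\Re(\bar\lambda\langle Ax,Bx\rangle_\FF)$ or $2\lambda\langle Ax,Bx\rangle_\FF$ depending on $\FF$) and using $x\in M_0(A)$ gives $\|A+\lambda B\|\ge \|(A+\lambda B)x\|\ge\|A\|$ for every $\lambda\in\FF$, i.e.\ $B\in A^\bot$.

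For the converse I would view $g(\lambda):=\|A+\lambda B\|^2$ as a convex function on $\FF$ whose minimum is attained at $\lambda=0$, which is precisely the content of $B\in A^\bot$. Writing $g(\lambda) = \sup_{\|x\|=1} h_x(\lambda)$ with $h_x(\lambda) = \|(A+\lambda B)x\|^2$, Danskin's envelope theorem identifies the subdifferential $\partial g(0)$ with the convex hull of $\nabla h_x(0) = 2\langle Ax, Bx\rangle_\FF$ taken over the active set of unit $x \in M_0(A)$ (here I identify $\FF$ with its natural Euclidean structure on $\RR$ or $\RR^2$). Since $0$ is a minimizer of $g$, $0\in\partial g(0)$, producing
$$0\in\conv\bigl\{\langle Ax, Bx\rangle_\FF : x\in M_0(A),\ \|x\|=1\bigr\}.$$

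The crux is removing the convex hull. A short check using $A^*A=\bigoplus_k A_k^*A_k$ shows $M_0(A) = \bigoplus_{k:\|A_k\|=\|A\|} M_0(A_k)$ is a $\KK$-linear subspace, so the target set coincides with the numerical range of the compression of $B^*A$ to $M_0(A)$. For $\FF=\CC$, the Toeplitz--Hausdorff theorem gives convexity. For $\FF=\RR$ (including quaternionic blocks), the set is the continuous image of the connected unit sphere of $M_0(A)$ into $\RR$, hence an interval, hence convex; the degenerate case $\dim_\RR M_0(A)=1$ is even easier since the set is a singleton. Either way, $0$ already lies in the set and supplies the desired $x$. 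I expect this convexity step to be the main obstacle, though it reduces to a classical result once the setup is in place. Finally, the \emph{moreover} clause follows immediately from the first part: when $\|A_i\|<\|A\|$, every $x\in M_0(A)$ has zero $i$th component, so $\langle Ax,Bx\rangle_\FF$ is independent of the $i$th blocks, and replacing $A_i$ by $0_{n_i}$ changes neither $\|A\|$ nor $M_0(A)$.
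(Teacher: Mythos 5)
The paper does not prove this proposition; it imports it verbatim from \cite{abelian} (Lemma 2.1 there), attributing it to the Stampfli--Magajna--Bhatia--\v Semrl line of results. So there is no in-paper argument to compare against, and your proposal has to stand on its own. It does: the backward direction is the standard expansion of $\|(A+\lambda B)x\|^2$ with vanishing cross term, and the forward direction via $0\in\partial g(0)$ for the convex function $g(\lambda)=\|A+\lambda B\|^2=\sup_{\|x\|=1}h_x(\lambda)$, followed by the identification $\partial g(0)=\conv\{2\langle Ax,Bx\rangle_{\FF}:x\in M_0(A),\ \|x\|=1\}$ (Ioffe--Tikhomirov/Valadier, legitimate here since the index set is compact and $\nabla h_x(0)$ depends continuously on $x$), is exactly the modern subdifferential proof of Bhatia--\v Semrl. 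The two points that genuinely need care are both handled correctly: the block decomposition $M_0(A)=\bigoplus_{k:\|A_k\|=\|A\|}M_0(A_k)$, which makes the target set a numerical range of a single compression; and the removal of the convex hull, via Toeplitz--Hausdorff when $\FF=\CC$ and via connectedness of the unit sphere of $M_0(A)$ (image in $\RR$ is an interval; the $\dim_\RR=1$ case is a singleton because the form is even) when $\FF=\RR$, which correctly covers the real and quaternionic blocks where Toeplitz--Hausdorff is not directly available. The moreover clause follows as you say, since $\|A_i\|<\|A\|$ forces the $i$-th component of every $x\in M_0(A)$ to vanish, so neither $M_0$ nor the pairing sees the $i$-th block. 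In short: correct, self-contained, and consistent with the cited source's statement.
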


    We  call $A\in\A$ to be \textit{smooth} if  there does not exist  $B\in\A$ such that $B^\bot\subsetneq A^\bot$.
    It was proved in~\cite[Lemmas 2.6 and  2.5]{abelian} that $A=\bigoplus_{k=1}^\ell A_k\in\A$ is smooth  if and only if
    there exists exactly one index $j$ such that $\|A_j\|=\|A\|$ and $\mathrm{dim}_{\mathbb K_j}M_0(A_j)=1$. Moreover, if $A$ is smooth and $x=\bigoplus_{k=1}^\ell x_k\in M_0(A)$, then $x_k=0$ for all $k\neq j$~and $$A^\bot=\big(0\oplus ((A_jx_j)x_j^*)\oplus 0\big)^\bot.$$
    By~\cite[Lemma 2.5]{abelian} this definition also agrees with the classical definition of smoothness, i.e., of having a unique supporting functional. For example, $A=0\oplus E_{st}^j\oplus 0$ are smooth elements for all matrix units $E_{st}^j\in\mathcal M_{n_j}(\mathbb K_j)$.

    Let us  briefly discuss the key idea of the proof of Theorem~\ref{maintheorem1}. In \cite{abelian}, the finite-dimensional pseudo-abelian $C^*$-algebras were classified, by finding the  minimal number of smooth points $A_1,\dots, A_s$ required so that $\aleph:=|\{B^\bot;\; B\in\mathcal L_{\bigcap_{k=1}^sA_k^\bot}\}|<\infty$. In that case, $\aleph$ equals the number of matrix blocks and $\dim_{\FF}\A=s+\aleph$. With a few additional lemmas the underlying~field and the number of real, complex, and quaternionic blocks were also found. We will follow the same~footsteps and focus on the minimal number of smooth points $A_1,\dots, A_s$ in~$\mathcal L_\A^\bot$ which are required so that $\mathcal L_{\bigcap_{i=1}^s A_i^\bot}\neq \{0\}$. Let us start by proving the existence of such a finite tuple.

    \begin{lemma}\label{coordinatewisesymmetricity}
    Let $\mathcal A=\bigoplus_{i=1}^\ell\mathcal M_{n_i}(\KK_i)$ be a finite-dimensional $C^*$-algebra over~$\mathbb F$ with $n_i\geq 2$. Then, there exists finitely many smooth elements $A_1,\dots,A_t$ of
    $\mathcal A$ such that the set $\mathcal B=\bigcap_{i=1}^t A_i^\bot$ contains a non-zero left-symmetric element.
    Moreover, for every  (possibly infinite)  collection of smooth points $(A_\alpha)_\alpha$  there exist $\mathbb F$-subspaces $\mathcal S_i\subseteq\mathcal M_{n_i}(\KK_i)$ such that $\bigcap_{\alpha} A_\alpha^\bot=\bigoplus_{i=1}^\ell \mathcal S_i$, and for any left-symmetric $Z=\bigoplus_{i=1}^\ell Z_i\in \mathcal L_{\bigcap_{\alpha}A_\alpha^\bot}=\mathcal L_{\bigoplus_{i=1}^\ell \mathcal S_i}$, we have  $Z_i\in \mathcal L_{\mathcal S_i}$ for all $1\leq i\leq \ell$.
    \end{lemma}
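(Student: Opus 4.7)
\textit{Proof proposal.} The key structural remark is that the outgoing neighbourhood of a smooth element is a single $\FF$-linear condition in a single block. For a smooth $A=\bigoplus_kA_k\in\A$ with unique $j$ such that $\|A_j\|=\|A\|$ and a normalised $x_j\in M_0(A_j)$, the recalled identity $A^\bot=\bigl((A_jx_j)x_j^\ast\bigr)^\bot$ together with Proposition~\ref{prop:M_0(A)} gives
\[
B\in A^\bot\ \Longleftrightarrow\ \langle A_jx_j,B_jx_j\rangle_\FF=0,
\]
which is a single $\FF$-linear condition on block~$j$, with no constraint on the other blocks. Hence for any family $(A_\alpha)_\alpha$ of smooth elements (finite or not), $\bigcap_\alpha A_\alpha^\bot=\bigoplus_{i=1}^\ell\mathcal S_i$, where $\mathcal S_i\subseteq\mathcal M_{n_i}(\KK_i)$ is the intersection of the conditions arising from those $\alpha$ with $j(A_\alpha)=i$ (and $\mathcal S_i=\mathcal M_{n_i}(\KK_i)$ if no such $\alpha$ exists). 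This is the first half of the ``moreover'' claim.

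For existence I would take all the $A_\alpha$'s supported in block~$1$ and cut $\mathcal S_1$ down to the one-dimensional $\FF$-subspace $\FF E_{11}$. Each $qE_{ab}\in\mathcal M_{n_1}(\KK_1)$ with $|q|=1$ is smooth as an element of~$\A$, and the associated constraint $\langle qe_a,B_1e_b\rangle_\FF=0$ kills one $\FF$-linear component of the $(a,b)$-entry of $B_1$. Letting $q$ vary over $\{1\}$, $\{1,i\}$ or $\{1,i,j,k\}$ (according as $\KK_1=\FF$, $\CC$ or $\HH$) for each $(a,b)\neq(1,1)$, and over $\{i\}$ or $\{i,j,k\}$ for $(a,b)=(1,1)$ when $\KK_1\neq\FF$, leaves exactly $\mathcal S_1=\FF E_{11}$. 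Any nonzero element of a one-dimensional $\FF$-subspace is automatically left-symmetric in it, since $E_{11}\perp cE_{11}$ with $c\in\FF$ means $|1+\lambda c|\geq 1$ for all $\lambda\in\FF$, forcing $c=0$. Using the block-norm formula $\|Z+\lambda U\|=\max_k\|Z_k+\lambda U_k\|$, a brief convexity check at $\lambda\to 0$ shows that for $Z:=E_{11}\oplus 0\oplus\dots\oplus 0$ and arbitrary $U=\bigoplus_iU_i\in\bigoplus_i\mathcal S_i$, $Z\perp U$ is equivalent to $E_{11}\perp U_1$; the latter forces $U_1=0$, and then $U\perp Z$ is automatic. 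Hence $Z\in\mathcal L_{\bigoplus_i\mathcal S_i}\setminus\{0\}$.

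The genuine work is the second part of the ``moreover'' claim: if $Z=\bigoplus_iZ_i\in\mathcal L_{\bigoplus_i\mathcal S_i}$, then each $Z_i\in\mathcal L_{\mathcal S_i}$. Fix $i$ and $U_i\in\mathcal S_i$ with $Z_i\perp U_i$, and set $U:=0\oplus\dots\oplus U_i\oplus\dots\oplus 0$. Splitting on whether $\|Z_i\|>\max_{k\neq i}\|Z_k\|$ or not, the block-norm formula gives $Z\perp U$ either trivially (second case) or directly from $Z_i\perp U_i$ (first case). Left-symmetry of~$Z$ then yields $\|U+\lambda Z\|\geq\|U_i\|$ for every $\lambda$. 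I would finish via the following convexity estimate: if $U_i\not\perp Z_i$, pick $\lambda_0$ with $\|U_i+\lambda_0Z_i\|<\|U_i\|$; the triangle inequality gives $\|U_i+t\lambda_0Z_i\|<\|U_i\|$ for every $t\in(0,1]$, whence the block-norm formula forces $t|\lambda_0|\cdot\max_{k\neq i}\|Z_k\|\geq\|U_i\|$, a contradiction upon letting $t\to 0^+$. In my view this last convexity step is the main obstacle of the proof: it is the point where one must rule out the possibility that the off-$i$ blocks of $Z$ silently absorb the orthogonality inequality at arbitrarily small scalings.
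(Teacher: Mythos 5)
Your proof is correct, and its overall skeleton coincides with the paper's: each smooth point imposes a single $\FF$-linear condition localized in one block (so $\bigcap_\alpha A_\alpha^\bot=\bigoplus_i\mathcal S_i$), and the component claim is proved by lifting $U_i$ to $U=0\oplus\dots\oplus U_i\oplus\dots\oplus 0$ and splitting on whether $\|Z_i\|=\|Z\|$. Two sub-steps are handled differently. For existence, the paper removes only the off-diagonal entries of the first block (via the smooth points $\mu E_{st}^1\oplus 0$, $s\neq t$), obtaining the diagonal matrices $\KK_1^{n_1}\oplus\bigl(\bigoplus_{i\ge 2}\mathcal M_{n_i}(\KK_i)\bigr)$, and then invokes \cite[Lemma 2.2]{abelian} to produce a nonzero left-symmetric point in a $C^*$-algebra with a pseudo-abelian summand; you instead cut the first block all the way down to $\FF E_{11}$ and verify left-symmetry of $E_{11}\oplus 0$ by hand, which costs a few more matrix units but is self-contained and avoids the external lemma. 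For the step you single out as the crux --- deducing $U_i\perp Z_i$ from $U\perp Z$ --- the paper argues contrapositively and more directly: if $C_1\not\perp B_1$, then every norming vector of $C=C_1\oplus 0$ lies in $M_0(C_1)\oplus 0$, so Proposition~\ref{prop:M_0(A)} gives $\langle Cy,By\rangle_\FF=\langle C_1y_1,B_1y_1\rangle_\FF\neq 0$ for all such $y$, i.e.\ $C\not\perp B$. Your convexity estimate ($\|U_i+t\lambda_0Z_i\|<\|U_i\|$ for all $t\in(0,1]$ while $t|\lambda_0|\max_{k\neq i}\|Z_k\|\to 0$) reaches the same conclusion without re-invoking the $M_0$-characterization, at the price of a limiting argument; both routes are sound, and each buys a slightly different economy (yours: fewer appeals to the orthogonality characterization; the paper's: no $\varepsilon$-style estimates).
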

    \begin{proof} To prove the existence, we note that the standard matrix units $E_{st}^1\oplus 0$ are smooth elements and, using Proposition \ref{prop:M_0(A)}, we have $$\bigcap\{(\mu E_{st}^1\oplus 0)^\bot;\;\;s\neq t \text{ and } \mu\in\{\boldsymbol{1,i,j,k}\}\cap\mathbb K_1\}=\KK_1^{n_1}
         \oplus\big(\bigoplus_{i=2}^\ell\mathcal M_{n_i}(\KK_i)\big).$$ The set on the right  is a $C^*$ algebra that contains pseudo-abelian direct summand and hence it does contain a non-zero left-symmetric point by \cite[Lemma 2.2]{abelian}.

    Now, we fix arbitrary collection of smooth elements $(A_\alpha)_\alpha$ of
    $\mathcal A$ and let $\mathcal B=\bigcap_\alpha A_\alpha^\bot$.       By definition of smoothness there exists a unique $j$ and a normalized  vector $u\in \mathbb K_j^{n_j}$ such that $M_0(A_\alpha)=0\oplus u{\mathbb K_j}\oplus 0$. So, if we define $\mathbb F$-linear functional $f_\alpha\colon \A\to\FF$ by $$f_\alpha\big(\oplus_{k=1}^\ell C_k\big)=\langle C_ju, (A_\alpha)_j u \rangle_{\FF},$$ where $(A_\alpha)_j$ is the $j$-th summand of $A_\alpha$, then by Proposition \ref{prop:M_0(A)}, we have $A_\alpha^\bot=\mathrm{ker}f_\alpha$. Hence, $\bigcap_\alpha A_\alpha^\bot=\bigcap_\alpha \mathrm{ker}f_\alpha$. Now, $\bigcap_\alpha\mathrm{ker}f_\alpha$ is a subspace of $\mathcal A$, equal to $\bigoplus_{i=1}^\ell \mathcal S_i$ for $\FF$-linear subspace $\mathcal S_i$ of $\mathcal A$. This proves the second part of the claim.

    To prove the last part, let $B=\bigoplus_{i=1}^\ell B_i\in \mathcal B=\bigoplus_{i=1}^\ell \mathcal S_i$ be a left-symmetric element. We need to prove $B_i\in \mathcal L_{\mathcal S_i}$. Without loss of generality, $i=1$. If $B_1=0$, then clearly
         $B_1\in \mathcal L_{\mathcal S_1}$. Let $B_1\neq0$ and
           assume erroneously        it is not left-symmetric in $\mathcal S_1$, i.e., there is $C_1\in \mathcal S_1$ such that $B_1\perp C_1$ but $C_1\not\perp B_1$. By Proposition \ref{prop:M_0(A)}, $B_1\perp C_1$ implies there exists a normalized  $x_1\in M_0(B_1)\subseteq \mathbb K_1^{n_1}$ such that \begin{equation}\label{eq76543}\langle B_1x_1, C_1x_1\rangle_\FF=0.\end{equation}
         Then $C_1\not\perp B_1$ implies that $C_1\neq 0$ and for all normalized $y_1\in M_0(C_1)$, we have \begin{equation*}\langle C_1y_1, B_1y_1\rangle_\FF\neq 0.\end{equation*}

    Consider $C=C_1\oplus 0\in \mathcal B$ and note that $M_0(C)=M_0(C_1)\oplus 0$. Hence, $C\not\perp B$.
         Let us prove $B\perp C$ (this will then contradict the fact that $B\in \mathcal L_{\mathcal B}$, and finish the proof). There are two cases to consider.

    \textbf{Case 1:}  $\|B_1\|=\|B\|$. Let $x=x_1\oplus 0$, where $x_1\in M_0(B_1)$ satisfies \eqref{eq76543}.
    By Proposition \ref{prop:M_0(A)}, $M_0(B_1)\oplus 0\subseteq M_0(B)$ so  $x\in M_0(B)$. Then, $\sum\limits_{i=1}^\ell \langle B_ix_i, C_ix_i\rangle_\FF=\langle B_1x_1, C_1x_1\rangle_\FF=0,$ and hence $B\perp C$.

    \textbf{Case 2:}  $\|B_1\|<\|B\|$. Then $M_0(B)\subseteq 0\oplus\big(\bigoplus_{i=2}^\ell\mathbb M_{n_i}(\mathbb K_i)\big)$, so for any $x\in M_0(B)$, we have
         $\sum\limits_{i=1}^\ell \langle B_ix_i, C_ix_i\rangle = 0+\sum\limits_{i=2}^\ell \langle B_ix_i, 0_{n_i}x_i\rangle=0.$ \end{proof}

    \begin{lemma}\label{minimalsmooth}
    Suppose $\A=\bigoplus_{k=1}^\ell\mathcal M_{n_k}(\KK_k)$ with $n_k\geq 2$ and let $A_1,\dots ,A_s\in\A$ be smooth elements where $s$ is the minimal number required to ensure that $\bigcap_{i=1}^s A_i^\bot$ contains a non-zero left-symmetric element. Then, $s\geq 1$ and there exists $j$ with \begin{equation}
        M_0(A_i)\subseteq 0\oplus\mathbb K_j^{n_j}\oplus 0\text{ for all } 1\leq i\leq s.
    \label{eq9090}\end{equation}
              \end{lemma}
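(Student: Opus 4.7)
The plan is to treat the two conclusions separately. The bound $s\geq 1$ will come directly from the hypothesis $n_k\geq 2$ combined with the structural result on $\mathcal L_\A^{\bot\bot}$ recalled in the excerpt. The common-index conclusion will be proved by contradiction: if the block-indices $j_1,\dots,j_s$ of the smooth points are not all equal, one can strictly shrink the collection while still keeping a non-zero left-symmetric element in the intersection, contradicting the minimality of~$s$.

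For $s\geq 1$: the empty intersection equals $\A$, so I would need $\mathcal L_\A\neq\{0\}$. The inclusion $\mathcal L_\A\subseteq\mathcal L_\A^{\bot\bot}$ follows directly from the defining relation $v^\bot\subseteq{}^\bot v$ (any $u\in\mathcal L_\A$ satisfies $v\perp u$, hence $u\in v^\bot$, for every $v\in\mathcal L_\A^\bot$). Since the excerpt identifies $\mathcal L_\A^{\bot\bot}$ with the sum of $1$-by-$1$ blocks of $\A$ and there are none under $n_k\geq 2$, this forces $\mathcal L_\A=\{0\}$, contradicting the defining property of~$s$.

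For the common index, attach to each smooth $A_i$ the unique block $j_i$ with $M_0(A_i)\subseteq 0\oplus\KK_{j_i}^{n_{j_i}}\oplus 0$, and suppose for contradiction that $J:=\{j_1,\dots,j_s\}$ has at least two distinct elements. Pick a non-zero left-symmetric $Z=\bigoplus_k Z_k$ in $\mathcal B:=\bigcap_{i=1}^s A_i^\bot=\bigoplus_k\mathcal S_k$. Lemma~\ref{coordinatewisesymmetricity} gives $Z_k\in\mathcal L_{\mathcal S_k}$ for every~$k$. For $k\notin J$ no functional $f_{A_i}$ restricts the $k$-th block, so $\mathcal S_k=\mathcal M_{n_k}(\KK_k)$, and the same ``no-left-symmetric'' fact used in the $s\geq 1$ step (applied now to the single block $\mathcal M_{n_k}(\KK_k)$ with $n_k\geq 2$) forces $Z_k=0$. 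Therefore some $Z_k\neq 0$ with $k\in J$, and $I_k:=\{i:j_i=k\}$ is a non-empty proper subset of $\{1,\dots,s\}$.

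The main step is to show that $Z':=0\oplus\cdots\oplus Z_k\oplus\cdots\oplus 0$ (only the $k$-th block non-zero) is a non-zero left-symmetric element of $\mathcal B_k:=\bigcap_{i\in I_k}A_i^\bot$; this contradicts the minimality of~$s$ since $|I_k|<s$. Membership $Z'\in\mathcal B_k$ is immediate, because each $f_{A_i}$ with $i\in I_k$ depends only on the $k$-th block, giving $f_{A_i}(Z')=f_{A_i}(Z)=0$. For left-symmetry, let $Y=\bigoplus_m Y_m\in\mathcal B_k$ satisfy $Z'\perp Y$. Since $\|Z'\|=\|Z_k\|$ and $M_0(Z')=0\oplus M_0(Z_k)\oplus 0$ by Proposition~\ref{prop:M_0(A)}, the orthogonality $Z'\perp Y$ forces $Z_k\perp Y_k$ in $\mathcal M_{n_k}(\KK_k)$; the left-symmetry of $Z_k$ in $\mathcal S_k$ then yields $Y_k\perp Z_k$. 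I would finish by splitting on whether $\|Y_k\|=\|Y\|$ or $\|Y_k\|<\|Y\|$: in the former case a witness $y\in M_0(Y_k)$ for $Y_k\perp Z_k$ lifts to $0\oplus y\oplus 0\in M_0(Y)$, giving $Y\perp Z'$ through Proposition~\ref{prop:M_0(A)}; in the latter case the second clause of Proposition~\ref{prop:M_0(A)} confines $M_0(Y)$ entirely off the $k$-th block, so $Z'\tilde y=0$ for every $\tilde y\in M_0(Y)$ and $Y\perp Z'$ holds automatically. I expect this last case split to be the main obstacle, since it requires carefully tracking the support of $M_0(Y)$; once that is pinned down, the verification reduces to one-block orthogonality and the coordinatewise left-symmetry from Lemma~\ref{coordinatewisesymmetricity} closes the argument.
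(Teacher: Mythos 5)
Your proposal is correct and follows essentially the same route as the paper's proof: both take a non-zero left-symmetric $Z$ in $\bigcap_{i=1}^s A_i^\bot$, use Lemma~\ref{coordinatewisesymmetricity} to locate a single block carrying a non-zero component of $Z$, restrict to the sub-collection of the $A_i$ whose $M_0$ lies in that block, verify that the truncated $Z$ remains left-symmetric in the larger intersection (via the same $\|Y_k\|=\|Y\|$ versus $\|Y_k\|<\|Y\|$ case split), and conclude by minimality of $s$. The only cosmetic differences are that the paper annihilates every component of $Z$ except the norm-attaining one by a direct orthogonality argument (rather than only the components outside $J$ via $\mathcal L_{\mathcal M_{n_k}(\KK_k)}=\{0\}$), and that it cites a lemma of \cite{abelian} for $s\geq 1$ where you derive it from $\mathcal L_{\A}\subseteq\mathcal L_{\A}^{\bot\bot}$.
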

    \begin{proof} By \cite[Lemma 2.2]{abelian}, we have $s\geq 1$.
        By Lemma \ref{coordinatewisesymmetricity}, for any non-zero left-symmetric  $Z=\bigoplus_{i=1}^\ell Z_i\in \bigcap_{i=1}^s A_i^\bot=\bigoplus_{i=1}^\ell \mathcal S_i$, we have $Z_i\in \mathcal L_{\mathcal S_i}$. Let $j_0$ be fixed such that $\|Z_{j_0}\|=\|Z\|$.
                  We claim that  $Z_i=0$ for each $i\neq j_0$.  Assume otherwise.          Then $Z\perp \big(0\oplus Z_i\oplus 0\big)\in \bigcap_{i=1}^s A_i^\bot$ but these are not mutually orthogonal which contradicts the left-symmetricity of
                 $Z\in \bigcap_{i=1}^s A_i^\bot$. Now, let $A_{i_1},\dots,A_{i_m}$ be those among $A_1,\dots,A_s$ such that \begin{equation}\label{5898}M_0(A_{i_p})\subseteq 0\oplus\mathbb K_{j_0}^{n_{j_0}}\oplus0\text{ for all }1\leq p\leq m.\end{equation}
                   It easily follows (c.f. the proof of Lemma~\ref{coordinatewisesymmetricity}) that
         $$\bigcap_{p=1}^mA_{i_p}^\bot=\mathcal M_{n_1}(\KK_1)\oplus\dots\oplus \mathcal M_{n_{j_0-1}}(\mathbb K_{j_0-1})\oplus \mathcal S_{j_0} \oplus \mathcal M_{n_{j_0+1}}(\mathbb K_{j_0+1})\oplus\dots\oplus \mathcal M_{n_\ell}(\KK_\ell).$$
                   Since $Z_i\in{\mathcal L}_{\mathcal S_i} $, we easily see that  $Z=0\oplus Z_{j_0}\oplus 0$ is left-symmetric already in $ \bigcap_{p=1}^mA_{i_p}^\bot$.
                   By the minimality of $s$, we have $m=s$ and  \eqref{eq9090} follows from~\eqref{5898}.
    \end{proof}

    The next two technical lemmas will be needed in the proof of our main procedure to extract individual blocks.

    \begin{lemma}\label{technical2}
       Let ${\mathcal V}:=\left(
    \begin{smallmatrix}
     0 & \KK \\
     \KK & \KK \\
    \end{smallmatrix}
    \right)\subset{\mathcal M}_2(\KK)$. Then  $\mathcal L_{\mathcal V}\neq \{0\}$. Moreover,
    \begin{equation}\label{999009} \mathcal V\cap \bigcap_{Z\in\mathcal L_{\mathcal V}}Z^\bot=\{0\}.
    \end{equation}
    \end{lemma}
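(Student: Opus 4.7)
The plan is to exhibit an explicit family of smooth left-symmetric elements of $\mathcal V$ whose outgoing orthogonal complements already cut out $\{0\}$ inside $\mathcal V$. The natural candidates are the scalar multiples $\alpha E_{12}$, $\alpha E_{21}$, $\alpha E_{22}$ where $\alpha$ runs over an $\FF$-basis of $\KK$ (so just $\alpha=1$ when $\FF=\CC=\KK$ or when $\KK=\RR$, and $\alpha\in\{\mathbf 1,\mathbf i\}$ or $\{\mathbf 1,\mathbf i,\mathbf j,\mathbf k\}$ when $\KK=\CC$ or $\HH$ over $\FF=\RR$). Granting that each of these lies in $\mathcal L_{\mathcal V}$, the first assertion $\mathcal L_{\mathcal V}\neq\{0\}$ is immediate from $E_{12}\in\mathcal L_\mathcal V$, and for~\eqref{999009} one only needs to compute each outgoing complement and intersect.

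Writing $W=\left(\begin{smallmatrix}0&a\\b&c\end{smallmatrix}\right)\in\mathcal V$, each candidate has one-dimensional $M_0$ over $\KK$: namely $M_0(\alpha E_{12})=M_0(\alpha E_{22})=(0,\mu)^T\KK$ and $M_0(\alpha E_{21})=(\mu,0)^T\KK$. Applying Proposition~\ref{prop:M_0(A)} together with the cyclic identity $\Re(\bar\mu z\mu)=|\mu|^2\Re(z)$ on $\HH$ gives
\[
\big(\alpha E_{12}\big)^\bot\cap\mathcal V=\{W:\Re(\bar a\alpha)=0\},\quad \big(\alpha E_{21}\big)^\bot\cap\mathcal V=\{W:\Re(\bar b\alpha)=0\},\quad \big(\alpha E_{22}\big)^\bot\cap\mathcal V=\{W:\Re(\bar c\alpha)=0\},
\]
with the stronger entrywise conditions $a=0$, $b=0$, $c=0$ in the complex-linear case $\FF=\CC$. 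Letting $\alpha$ run over the chosen $\FF$-basis of $\KK$ forces $a=0$ from the first family, $b=0$ from the second, and $c=0$ from the third, so the joint intersection collapses to $\{W:a=b=c=0\}=\{0\}$, which will establish~\eqref{999009}.

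What remains---and is the main technical obstacle---is to verify left-symmetry itself, i.e.\ that every $W$ in one of these complements is BJ-orthogonal back to the corresponding $\alpha E_{ij}$. The cases $(1,2)$ and $(2,1)$ are clean: for any $v\in M_0(W)$ one gets $(\alpha E_{12}v)^*Wv=\bar v_2\bar\alpha\,av_2$, whose real part is $|v_2|^2\Re(\bar\alpha a)=|v_2|^2\Re(\bar a\alpha)=0$, and symmetrically for $(2,1)$. The delicate case is $\alpha E_{22}$, where $(\alpha E_{22}v)^*Wv=\bar v_2\bar\alpha bv_1+\bar v_2\bar\alpha cv_2$ carries an off-diagonal cross-term $\bar v_2\bar\alpha bv_1$ that does not vanish by inspection. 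To handle it I would exploit the eigenvalue equation for $v\in M_0(W)$ with respect to
\[
W^*W=\begin{pmatrix}|b|^2 & \bar bc\\ \bar cb & |a|^2+|c|^2\end{pmatrix}
\]
at its top eigenvalue $\lambda=\|W\|^2$: its first row yields $(\lambda-|b|^2)v_1=\bar bc\,v_2$, so whenever $\lambda\neq|b|^2$ one has $bv_1=(\lambda-|b|^2)^{-1}|b|^2cv_2$, turning $\Re(\bar v_2\bar\alpha bv_1)$ into a nonnegative real multiple of $|v_2|^2\Re(\bar\alpha c)=0$. The remaining degenerate case $\lambda=|b|^2$ forces $\bar bc=0$, reducing to $b=0$ or $c=0$, either of which is treated by hand by picking $v=(0,\mu)^T$ or $v=(\mu,0)^T$ from the (then two-dimensional) $M_0(W)$. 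Combining the three cases with the intersection computation of the previous paragraph completes the proof.
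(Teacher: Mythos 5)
Your overall strategy coincides with the paper's: exhibit the $\KK$-scalar multiples of $E_{12}$, $E_{21}$, $E_{22}$ along an $\FF$-basis of $\KK$ as left-symmetric elements of $\mathcal V$, compute their outgoing neighbourhoods inside $\mathcal V$ via Proposition~\ref{prop:M_0(A)}, and intersect to force $a=b=c=0$. The computation of the three outgoing neighbourhoods and the final intersection are correct, and your verification of left-symmetry for $\alpha E_{22}$ via the eigenvalue equation $(\lambda-|b|^2)v_1=\bar b c\, v_2$ for $v\in M_0(W)$ is a clean, uniform replacement for the paper's argument, which splits into the cases $\KK=\FF$ and $\FF=\RR\subsetneq\KK$ and computes $\|B\|$ and an explicit norm-attaining vector by hand. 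The degenerate branch is also essentially fine, though your parenthetical ``(then two-dimensional) $M_0(W)$'' is not always accurate: when $b=0$, or when $c=0$ and $|b|>|a|$, the space $M_0(W)$ is one-dimensional. The vectors $e_2$, resp.\ $e_1$, that you propose do lie in $M_0(W)$ and do the job, so this is cosmetic.

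The one step that does not go through as written is the claim that the case $\alpha E_{21}$ is ``clean'' and follows ``symmetrically'' from the displayed computation for $\alpha E_{12}$. It does not: for $A=\alpha E_{21}$ and $W=\left(\begin{smallmatrix}0&a\\ b&c\end{smallmatrix}\right)$ one has $(Av)^*(Wv)=\bar v_1\bar\alpha(bv_1+cv_2)$, which carries exactly the off-diagonal cross term $\bar v_1\bar\alpha c v_2$ that you single out as the difficulty in the $\alpha E_{22}$ case; only $\alpha E_{12}$ is genuinely cross-term-free, because the first row of $W$ is $(0,a)$. The conclusion is nevertheless true and the repair is short: either run your eigenvector argument once more (the first row of $W^*Wv=v\lambda$ gives $cv_2=\tfrac{\lambda-|b|^2}{|b|^2}\,bv_1$ when $b\neq0$, turning the cross term into a nonnegative real multiple of $\Re(\bar\alpha b)=0$, with the case $b=0$ trivial), or, as the paper does, invoke the adjoint $X\mapsto X^*$, which is a conjugate-linear isometry, hence a BJ-isomorphism, preserves $\mathcal V$, and carries $\bar\alpha E_{12}$ to $\alpha E_{21}$, so left-symmetry of the $(1,2)$ family transfers to the $(2,1)$ family. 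With that one sentence added, the proof is complete.
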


    \begin{proof}

    We consider two  cases.

    \textbf{Case 1:} $\KK=\FF\in\{\RR,\CC\}$. We first prove  $A=E_{22}$ is left-symmetric. Clearly,
         $$A^\bot\cap {\mathcal V}=\left(
    \begin{smallmatrix}
         0 & \KK \\
     \KK & 0 \\
    \end{smallmatrix}\right).$$ Any matrix $B=\left(
    \begin{smallmatrix}
     0 & c \\
     d & 0 \\
    \end{smallmatrix}\right)\in A^\bot\cap {\mathcal V}$ attains its norm on $x=e_1$ or $x=e_2$ (the standard basis of  $\KK^2$). In both cases, we have $\langle Bx,Ax\rangle_{\FF}=0$ giving the desired  $B\perp A=E_{22}$.

    Next,  we prove $A=E_{12}$ is left symmetric. Here, $A^\bot\cap {\mathcal V}$
    consists of matrices of type $B=\left(
    \begin{smallmatrix}
     0 & 0 \\
     d & c \\
    \end{smallmatrix}\right)$. Then,  $\mathrm{Im}\,B\subseteq e_2 \KK$ while $\mathrm{Im}\,A=e_1 \KK$.
              So, $B\perp A$, as claimed. Finally, $A=E_{21}$ is also left-symmetric because  the adjoint operation which is a conjugate-linear isometry and hence   a BJ-isomorphism, swaps $E_{12}$ and $E_{21}$ and leaves the $\FF$-linear subspace ${\mathcal V}$ invariant. This proves \eqref{999009} because we have
    $$\mathcal V\cap E_{22}^\bot\cap E_{12}^\bot\cap E_{21}^\bot=\{0\}.$$

    \textbf{Case 2:} $\FF=\RR$ and $\KK=\CC$ or $\HH$. We   first prove that $A=E_{22}$ is left-symmetric.  Its  outgoing neighbourhood, relative to ${\mathcal V}$, consists of matrices of the~form
                                                 $$
    B=
    \begin{pmatrix}
     0 & a \\
     b &  {\bf q} \tau\\
    \end{pmatrix}\in\begin{pmatrix}
     0 & \KK \\
     \KK & \boldsymbol{q}\RR \\
    \end{pmatrix},$$
     where  $\boldsymbol{q}\in\KK$ is a   purely imaginary  unimodular number and $\tau\in\RR$. By homogeneity of BJ orthogonality we can assume $\tau=1$ or $\tau=0$.
                        If $\tau=1$, then,  in computing the norm of
    $B$,  one first writes
    $B^\ast B=\left(  \begin{smallmatrix}
     0 & \overline{b} \\
     \overline{a} & \overline{\bf q} \\
    \end{smallmatrix}\right).    \left(\begin{smallmatrix}     0 & a \\
     b & {\bf q} \\
    \end{smallmatrix}\right)=\left(\begin{smallmatrix}
     | b| ^2 & \bar{b}{\bf q} \\
     \bar{\bf q}b & | a| ^2+1 \\
    \end{smallmatrix}\right)$ and then embeds its coefficients (notice that only $z=\overline{b}{\bf q}$ and its conjugate can be nonreal) isometrically into $\CC$ where singular value decomposition is easily available. It yields $\|B\|^2=\frac{\sqrt{\left(| a| ^2+| b|^2+1\right)^2 -4|a|^2|b|^2}+| a|^2+| b| ^2+1}{2}$
       and $B$
              achieves its norm on
    $x=\left(\begin{smallmatrix} \boldsymbol{q}\lambda\\ \overline{\boldsymbol{q}}b\boldsymbol{q}\end{smallmatrix}\right)$, with $\lambda:=(\|B\|^2-1-|a|^2)$.
                 One  can easily see that $$\langle Bx,Ax\rangle_{\FF}        = \mathrm{Re}((Ax)^\ast(Bx))=\mathrm{Re}( (1+\lambda)  \overline{\overline{\boldsymbol{q}}b\boldsymbol{q}}\cdot b\boldsymbol{q} )=
       (1+\lambda)\,\mathrm{Re}(\overline{b\boldsymbol{q}}\cdot\boldsymbol{q}\cdot b\boldsymbol{q} )=0$$ because conjugation of a purely imaginary $\boldsymbol{q}\in\KK$ is again purely imaginary.
                    So $B\perp A$.
        If $\tau=0$ we get the second typical class of matrices in $A^\bot\cap {\mathcal V}$, namely $B=\left(
    \begin{smallmatrix}
     0 & a \\
     b & 0 \\
    \end{smallmatrix}\right)\in A^\bot$. This is treated as in  Case 1 and again gives $B\perp A$.

    We now prove that $E_{12}$ is also left-symmetric. In this case, the first class of typical matrices in $A^\bot\cap{\mathcal V}$ take the form   $B=\left(\begin{smallmatrix}0 & \boldsymbol{q} \\b & a \\ \end{smallmatrix}\right)$ for some purely imaginary unimodular number $\boldsymbol{q}$. Following the previous footsteps  we easily compute  that $$\|B\|^2=\frac{1}{2} \left(\sqrt{(| a|^2+|b|^2+1)^2-4|b|^2}+|a|^2+|b|^2+1\right)$$
    and that $B$ achieves its norm on $x=\left(\begin{smallmatrix} a\lambda\\ \overline{a}ba\end{smallmatrix}\right)$, with $\lambda:=(\|B\|^2-1-|a|^2)$.
                   Clearly,      $Bx=\left(\begin{smallmatrix}
    \boldsymbol{q} \bar{a}ba\\
    \ast\end{smallmatrix}\right)$
          and hence $\langle Bx, Ax\rangle_{\mathbb F}=\mathrm{Re}(\overline{\overline{a}ba}\cdot \boldsymbol{q}\bar{a}ba)=0$.
                The second class of  matrices from $A^\perp\cap{\mathcal V}$ take the   form $B=\left(
    \begin{smallmatrix}
     0 & 0 \\
     b & a \\
    \end{smallmatrix}\right)= e_2(e_1 \overline{b}+e_2 \overline{a})^\ast$,  attain their norm  on    $x=(e_1 \overline{b}+e_2 \overline{a})$, and map it into $Bx=e_2$. Since $Ax=E_{12}x=e_1\overline{a}$ we get~$B\perp A$.

    Finally, the left symmetricity of $Z\in\{\boldsymbol{q}E_{22}, \boldsymbol{q}E_{12},  \boldsymbol{q}E_{21}\}$ for arbitrary unimodular purely imaginary number $\boldsymbol{q}$, follows as in Case 1 by using   BJ-isomorphisms induced by isometries $X\mapsto UXV^\ast$ or $X\mapsto X^\ast$. Hence, \eqref{999009} holds.                \end{proof}

    \begin{lemma}\label{technical1} Let $\mathcal V\subseteq\mathbb K$ be a proper $\mathbb F$-subspace. Then,
    the subspace ${\mathcal T}:=\mathcal V E_{11}+\sum_{(i,j)\neq (1,1)} (\KK E_{ij})$ of $\mathcal M_n(\mathbb K)$ contains a non-zero left-symmetric element if and only if $n=2$ and ${\mathcal V}=0$.
         \end{lemma}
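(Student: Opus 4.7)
The ``if'' direction is immediate: when $n=2$ and $\mathcal V=0$, the space $\mathcal T$ coincides with the $2\times 2$ subspace $\left(\begin{smallmatrix}0 & \KK\\ \KK & \KK\end{smallmatrix}\right)$ treated in Lemma~\ref{technical2}, which was shown there to contain the non-zero left-symmetric elements $E_{12}, E_{21}, E_{22}$. For the converse, suppose $A\in\mathcal T$ is a non-zero left-symmetric element and assume for contradiction that either $n\geq 3$, or $n=2$ and $\mathcal V$ is a proper non-zero $\FF$-subspace of $\KK$. The plan is to exhibit an explicit $B\in\mathcal T$ satisfying $A\perp B$ and $B\not\perp A$, contradicting left-symmetry of $A$ in $\mathcal T$.

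I first reduce to the case that $A$ has a non-zero entry in its first row or first column. If instead $A=0\oplus A'$ with $A'\in\mathcal M_{n-1}(\KK)$, then the corner embedding $B'\mapsto 0\oplus B'$ of $\mathcal M_{n-1}(\KK)$ into $\mathcal T$ is an isometry preserving the norm-attainment sets $M_0(\cdot)$; restricting the left-symmetry condition to test elements of the form $0\oplus B'$ shows $A'$ is left-symmetric in $\mathcal M_{n-1}(\KK)$. For $n\geq 3$, $\mathcal M_{n-1}(\KK)$ is a single matrix block of dimension $\geq 2$, whose only left-symmetric element is $0$ (an immediate consequence of the characterization of $\mathcal L_{\mathcal A}^\perp$ recalled in Section~\ref{generallabel} from \cite[Theorem~1.2]{abelian}), contradicting $A\neq 0$. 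For $n=2$ this step is vacuous and $A$ is handled directly below.

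In the remaining cases, $B$ is constructed in the style of the $\left(\begin{smallmatrix}0 & 1\\ 1 & 1\end{smallmatrix}\right)$-type witness implicit in Lemma~\ref{technical2}: an upper-triangular-type perturbation $B\in\mathcal T$ supported on the first row or column together with a well-chosen second index, with $B_{11}\in\mathcal V$. The singular-value decomposition argument from Lemma~\ref{technical2} shows $M_0(B)$ is one-dimensional; the free scalar parameters of $B$ are then tuned so that $\langle Ax,Bx\rangle_\FF=0$ at some $x\in M_0(A)$ (yielding $A\perp B$), while the value $\langle Bz,Az\rangle_\FF$ at the unique max direction $z$ of $B$ is arranged to be non-zero (yielding $B\not\perp A$). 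The main obstacle is the subcase $n=2$ with $\mathcal V\neq 0$, where the restriction $B_{11}\in\mathcal V$ is non-trivial; one resolves this by selecting $\mu\in\mathcal V\setminus\{0\}$ of appropriate type and, when $\KK=\HH$, invoking the quaternionic conjugation-invariance calculations from Lemma~\ref{technical2} to satisfy all of these constraints simultaneously.
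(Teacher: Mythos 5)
The easy direction and the corner-reduction in your first step are fine: the embedding $B'\mapsto 0\oplus B'$ does preserve and reflect BJ orthogonality (since $M_0(0\oplus A')=0\oplus M_0(A')$), and the only left-symmetric element of a full block $\mathcal M_{n-1}(\KK)$ with $n-1\ge 2$ is $0$, so a non-zero left-symmetric $A$ must indeed meet the first row or column. But this disposes of only a thin special case, and what remains is the entire content of the lemma. Your second step is a plan, not a proof: you assert that a witness $B\in\mathcal T$ ``supported on the first row or column together with a well-chosen second index'' can always have its ``free scalar parameters tuned'' so that $A\perp B$, $B\not\perp A$ and $B_{11}\in\mathcal V$ simultaneously. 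Nothing in the proposal shows this is possible for an arbitrary $A\in\mathcal T$, and it is not clear a priori: $A$ may have large rank, $M_0(A)$ may have $\KK$-dimension $\ge 2$, and the available directions for $B$ are constrained by the requirement $B_{11}\in\mathcal V$. The claim that ``$M_0(B)$ is one-dimensional'' by the SVD computation of Lemma~\ref{technical2} is also unjustified for the shapes of $B$ you would actually need.

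Concretely, the argument the paper gives (and which your proposal skips) proceeds by a dichotomy that your outline never establishes: first, if $\dim_\KK M_0(A)\ge 2$ one takes $x\in M_0(A)$ with $x^*e_1=0$ and $B=Axx^*$ (note this $B$ is generally \emph{not} supported on a first row/column plus one index, so it falls outside your announced template); second, if $\dim_\KK M_0(A)=1$ and $n\ge 3$ one shows the column rank of $A$ is at most $2$, and only then does an explicit, case-by-case construction of $B$ become feasible --- with genuinely different witnesses for rank one versus rank two, and within rank two further branching on whether certain coordinates ($\tau$, $\gamma$, $c_1$ in the paper's notation) vanish. The subcase $n=2$, $\mathcal V\ne 0$ (which forces $\KK\in\{\CC,\HH\}$ over $\FF=\RR$) is also not resolved by ``selecting $\mu\in\mathcal V\setminus\{0\}$ of appropriate type''; the paper handles it by conjugating $A$ into a real $2\times 2$ corner $\mathcal M_2(\RR)\overline\mu\oplus 0\subseteq\mathcal T$ and importing a non-left-symmetry witness from \cite[Lemma 2.2]{abelian}. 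Until these constructions are actually carried out and verified, the hard direction of the lemma remains unproved.
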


    \begin{proof} The `only if' part follows from Lemma \ref{technical2}. We prove the `if' part. Let $A\in {\mathcal T}$ be left-symmetric and $\|A\|=1$. We divide the proof into several cases.

        {\bf Case 1}.  $\dim_{\KK}M_0(A)\ge 2$. Then there exists a normalized  $x\in M_0(A)$ with
        $\langle e_1,x\rangle_{\KK}:=x^\ast e_1=0$ and there exists a normalized $w\in M_0(A)$ which is $\KK$-orthogonal to $x$.  Let $B=Axx^\ast$; then  $e_1^\ast B e_1=(e_1^\ast Ax)(x^\ast e_1)=(e_1^\ast Ax)\cdot0=0$, so
                 $B\in{\mathcal T} $. Clearly, $Bw=0$,
                 hence $\langle Aw,Bw\rangle_{\FF}=0$, so $A\perp B$.  Being left-symmetric, this implies $B\perp A$. However $B$ achieves its norm only on $\KK$-multiples of $x$, that is, on $z=x\mu$ (with $\mu\in\KK$ non-zero to avoid trivialities), so, due to $Bx=Ax$,   $$0=\langle Bz,Az\rangle_{\FF}=\langle B(x\mu),A(x\mu)\rangle_{\FF}=\langle Ax\mu,Ax\mu\rangle_{\FF}=\|Ax\mu\|^2>0,$$ a contradiction. So, Case 1 is impossible.

    {\bf Case 2}. $\dim_{\KK} M_0(A)=1$. Then, if
    $n\ge 3$, the $\KK$-orthogonal complement of $M_0(A)$  has dimension at least $2$ and hence contains a normalized vector $x$ which is $\KK$-orthogonal to $e_1$.  Clearly $x$ is also $\KK$-orthogonal to every vector from $M_0(A)$, so  $B=yx^\ast\in {\mathcal T}$ and  $A\perp B$ regardless of the choice of $y$. If $Ax\neq 0$, then, by choosing $y:=Ax$ we get a contradiction as before. Therefore, the (column) rank of $A$ is at most two --- it does not annihilate a vector from $M_0(A)$ and perhaps a vector from $$M_0(A)^\bot\ominus \bigl((\KK e_1)^\bot\cap M_0(A)^\bot\bigr).$$ Note for $n=2$, the (column) rank of $A$ is always at most two.

    {\bf Subcase (a)}. $A$ has (column) rank-1. So, $A=xy^*$ for normalized  $x=\sum e_i\xi_i$ 
         and $y=\sum e_i\upsilon_i$.      First, let       $\mathcal V\neq 0$.                There exist  a unitary~$U$ which fixes $e_1$ and maps $x- e_1\xi_1=\sum_2^n  e_i\xi_i$      into $ e_2(\mathop{\mathrm{sgn}}_0(\xi_1) s\nu)$ where    $\mathrm{sgn}_0(\alpha)=\frac{\alpha}{|\alpha|}$ if $\alpha\neq 0$ while  $\mathrm{sgn}_0(0)=1$, and  $s:=\sum_2^n|\xi_i|^2=\sqrt{1-|\xi_1|^2}$ and where $\nu=1\in\HH$ if $\xi_1\neq0$ while $\nu$ is unimodular and satisfies $\nu\mathop{\mathrm{sgn}}_0(\overline{\upsilon_1})\in{\mathcal V}$ if $\xi_1=0$  (we also find a unitary $V$ which fixes $e_1$ and maps $y-e_1\upsilon_1=\sum_2^n  e_i\upsilon_i$ into $e_2(\mathop{\mathrm{sgn}}_0(\upsilon_1)c\epsilon)$; here $ c=\sqrt{1-|\upsilon_1|^2}$ while  $\epsilon=1$ if $\upsilon_1\neq0$ and $\mathop{\mathrm{sgn}}_0(\xi_1)\overline{\epsilon}\in{\mathcal V}$ if $\upsilon_1=0$, respectively),
         such that
    $$A=U^\ast ( e_1\xi_1+e_2\,\mathrm{sgn}_0(\xi_1) s\nu)(e_1\upsilon_1+  e_2\,\mathrm{sgn}_0(\upsilon_1) c\epsilon)^\ast V.$$
              Since $U {\mathcal T}V^\ast={\mathcal T}\ni A$, so $U AV^\ast\in (\mathcal M_2(\RR)\overline{\mu}\oplus 0)\subseteq \mathcal T$, where $\overline{\mu}=\mathrm{sgn}_0(\xi_1)\,\mathrm{sgn}_0(\overline{\upsilon_1})$ if $\xi_1\upsilon_1\neq0$ and $\overline{\mu}=\nu\mathop{\mathrm{sgn}}_0(\overline{\upsilon_1})\in{\mathcal V}$ or $\overline{\mu}=\mathrm{sgn}_0({\xi_1})\overline{\epsilon}\in{\mathcal V}$, if $\xi_1=0$ or $\upsilon_1=0$, respectively. Then by \cite[Lemma 2.2]{abelian},  for  $\hat{A}:=U AV^\ast\mu\in\mathcal M_2(\RR)\oplus 0$  of (column) rank-$1$, there exists $\hat{B}\in M_2(\RR)\oplus 0$ with $\hat{A}\perp \hat{B}$ but $\hat{B}\not\perp \hat{A}$. Then, $B:=\hat{B}\overline{\mu}\in \mathcal M_2(\RR)\overline{\mu}\oplus 0\subseteq \mathcal T$
         is the matrix with $A\perp B$ and $B\not\perp A$, a contradiction to left-symmetricity of $A$.

    Second let $n\geq 3$ and ${\mathcal V}=0$. Then $\xi_1=0$ or $\upsilon_1=0$. By adjoint $X\mapsto X^\ast$, which is a conjugate-linear isometry (hence a BJ-isomorphism), we can assume $\upsilon_1=0$. Now, there exists unitary $U, V$ which fixes $e_1$ such that $Ux=e_1\xi_1+e_2\mu$ and $Vy=e_2$. So, without of loss of generality, $A= (e_1\xi_1+e_2\mu)e_2^*=E_{12}\xi_1+ E_{22}\mu$.   Consider temporarily  $A$ as belonging to $\mathcal M_2(\mathbb K)$, then by \cite[Lemma 2.2]{abelian}, there exists $B\in\mathcal M_2(\mathbb K)$ such that $A\perp B$ but $B\not\perp A$. Now, we consider the map $\phi : \mathcal M_2(\mathbb K)\rightarrow \mathcal T$ defined as $\phi(X)=(X\oplus 0_{n-2})(V\oplus I_{n-3})$, where $V=\left(\begin{smallmatrix} 0&0&1\\0&1&0\\1&0&0 \end{smallmatrix}\right)$ is a permutation matrix. This map is a linear  isometric embedding. So, $\phi$ satisfies $C\perp D$ if and only if $\phi(C)\perp \phi(D)$. It implies $A=\phi(A)\perp \phi(B)$ but $\phi(B)\not\perp A$, a contradiction to left-symmetricity of $A\in\mathcal T$.

    {\bf Subcase (b)} $A$ has (column) rank-2.      Let $A=x_1y_1^\ast +\sigma x_2y_2^\ast$ be its singular value decomposition (see~\cite{zhang} if $\KK=\HH$) with $0<\sigma\leq 1$
         where $x_1=\sum\xi_i e_i$ and $x_2=\sum \zeta_i e_i $       are $\KK$-orthogonal and likewise for  $y_1=\sum \upsilon_ie_i$  and $y_2=\sum\omega_i e_i$.  If $n\ge 3$, then using $X\mapsto UXV^\ast$ for suitable unitaries $U,V$ which fix $e_1$  we can achieve that $$A=(e_1\xi_1+e_2 \mathrm{sgn}_0(\xi_1)s)( e_1\upsilon_1+e_2\mathrm{sgn}_0(\upsilon_1)c)^\ast +\sigma ( e_1\zeta_1+ e_2\beta+e_3\gamma)(e_1\omega_1+ e_2\nu+e_3\tau)^\ast$$
    for real $s=\sqrt{1-|\xi_1|^2}$ and $c=\sqrt{1-|\upsilon_1|^2}$ and suitable $\beta,\gamma,\nu,\tau\in\KK$. If $\tau\neq0$, then
    $B=( e_1\zeta_1+ e_2\beta+e_3\gamma)\overline{\tau}e_3^\ast$ has zero entry at $(1,1)$ position, so belongs to~${\mathcal T}$, and $A\perp B$ (because $A$ attains its norm on  $( e_1\upsilon_1+e_2\mathrm{sgn}_0(\upsilon_1)c)$ which is
    $\KK$-orthogonal to $e_3$) but $B\not\perp A$ because $B$ attains its norm only on multiples of  $e_3$ and   $\langle Be_3,Ae_3\rangle_{\FF}=
         \sigma|\tau|^2(|\zeta_1|^2+|\beta|^2+|\gamma|^2)\neq0$, so again $A$ is not left-symmetric.

    If $\tau=0$ but $\gamma\neq0$, then consider
    $$B=\gamma e_3(e_1\omega_1+e_2\nu+e_3\tau)^\ast$$
    which again has $(1,1)$ entry equal to $0$. Its image is spanned by $e_3$, while the image of $A$ under its norm-attaining vector $( e_1\upsilon_1+e_2\mathrm{sgn}_0(\upsilon_1)c)$ is spanned by $e_1,e_2$. Hence, $A\perp B$. That $B\not\perp A$ is similar as in the previous case.

    The only  case left is when, after modifying $A$ with $X\mapsto UXV^\ast$ for unitaries  $U,V$ which fix $e_1$ (and absorbing the appropriate scalar in the second factor),
         we~have
         $$A=(c_1 e_1+s_1e_2)(c_2 e_1\mu+s_2e_2\mu)^\ast+\sigma (-s_1 e_1+c_1e_2)(-s_2 e_1\nu+c_2e_2\nu)^\ast\in \bM_2(\KK)\oplus 0$$ for some unimodular $\mu, \nu\in\KK$, $0<\sigma\le 1$ and  real $c_i,s_i$ with $|c_i|^2+|s_i|^2=1$ for $i=1,2$ (note here $c_1c_2\overline{\mu} +\sigma s_1s_2\overline{\nu}=A_{11}\in {\mathcal V}$). Then,
    $B=(e_2\mathrm{sgn}_0(c_1)\overline{\nu}) (-s_2 e_1+c_2e_2)^\ast$ has zero $(1,1)$ entry, so $B\in\mathcal T$. Also,  $\langle Ax, Bx\rangle_{\mathbb F}=0$ for  $x=(c_2e_1+s_2e_2)\in M_0(A)$ so $A\perp B$, but $B$ achieves the norm only on multiples of  $(-s_2 e_1+c_2e_2)$ and maps it into $\mathrm{sgn}_0(c_1)e_2\overline{\nu}$, while $A$ maps  it into $\sigma(-s_1e_1+c_1e_2)\overline{\nu}$. So, unless $c_1=0$ we have $B\not\perp A$, again contradicting left-symmetricity of $A$. So, we must have
    $$A=e_2(c_2 e_1\mu+s_2e_2\mu)^\ast+\sigma (- e_1)(-s_2 e_1\nu+c_2e_2\nu)^\ast.$$
    Then, $(1,1)$ entry of $A$ is  $A_{11}=\sigma s_2\overline{\nu}\in\mathcal V$. Now,  $B:=\sigma(-e_1)(-s_2 e_1\nu+c_2e_2\nu)^\ast$ has the same $(1,1)$ entry as $A$, so $B\in\mathcal T$, and it attains its norm at scalar multiple of $-s_2 e_1\nu+c_2e_2\nu$ while $\langle A(-s_2 e_1\nu+c_2e_2\nu), B(-s_2 e_1\nu+c_2e_2\nu)\rangle_{\FF}=\sigma^2\neq 0$. So  $B\not\perp A$. On the other hand, we have $A\perp B$ because $A$ attains norm on $c_2 e_1\mu+s_2e_2\mu$, while $B(c_2 e_1\mu+s_2e_2\mu)=0$.
         Again, $\mathcal T$ does not contain a non-zero left-symmetric element.

    So, in all the  (sub)cases, we get that a non-zero left-symmetric element is possible in $\mathcal T$ only in the case when $n=2$ and $\mathcal V=0$.      
    \end{proof}

    Notice for a $C^*$-algebra $\A=\bigoplus_{i=1}^\ell\mathcal M_{n_i}(\KK_i)$, that  $(0\oplus \mathcal M_{n_j}(\mathbb K_j)\oplus 0)^\bot$ coincides with  all the elements of $\mathcal A$ which vanish at $j$-summand.

    \begin{corollary}\label{MnF} Let $\A=\mathcal M_{n_1}(\mathbb K_1)\oplus\dots\oplus\mathcal M_{n_\ell}(\mathbb K_\ell)$ with $n_1,\dots, n_\ell\ge 2$. Assume there exists a smooth element $A\in\A$ such that $A^\bot$ contains a non-zero left-symmetric point, i.e., $\mathcal L_{A^\bot}\neq \{0\}$. Then there exists $j$ with $n_j=2$, $\KK_j=\mathbb F$ such that $M_0(A)\subseteq 0\oplus\KK_j^{n_j}\oplus0=0\oplus\mathbb F^2\oplus 0.$ Moreover, in this case \begin{equation*}A^\bot\cap\bigg(\bigcap_{Z\in\mathcal L_{A^\bot}}Z^\bot\bigg)=(0\oplus\mathcal M_{n_j}(\KK_j)\oplus 0)^\bot.
         \end{equation*}
    \end{corollary}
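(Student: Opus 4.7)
The plan is to leverage smoothness of $A$ to identify $A^\bot$ with a direct sum in which one block coincides with the subspace $\mathcal T$ from Lemma~\ref{technical1}, and then to invoke Lemmas~\ref{coordinatewisesymmetricity}, \ref{technical1}, and \ref{technical2} in sequence.

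First, by smoothness there is a unique index $j$ with $\|A_j\|=\|A\|$ and $M_0(A_j)=x_j\KK_j$ for some normalized $x_j\in\KK_j^{n_j}$; in particular $M_0(A)\subseteq 0\oplus\KK_j^{n_j}\oplus 0$. Applying the BJ-isomorphism that conjugates each block by unitaries, I may assume $x_j=e_1$ and $A_je_1=\|A\|e_1$. Proposition~\ref{prop:M_0(A)} then shows $B=\bigoplus B_i\in A^\bot$ iff $\langle B_je_1,\|A\|e_1\rangle_\FF=0$, equivalently $(B_j)_{11}\in\mathcal V$, where $\mathcal V$ is the kernel of the nontrivial $\FF$-linear functional $q\mapsto\langle q,1\rangle_\FF$ on $\KK_j$---a proper $\FF$-subspace of codimension $1$. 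Hence
$$A^\bot=\bigoplus_{i\neq j}\mathcal M_{n_i}(\KK_i)\oplus\mathcal S_j,\qquad \mathcal S_j=\mathcal V E_{11}+\sum_{(s,t)\neq(1,1)}\KK_j E_{st},$$
and $\mathcal S_j$ is exactly the subspace $\mathcal T$ of Lemma~\ref{technical1}.

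Next, Lemma~\ref{coordinatewisesymmetricity} applied to the single smooth element $A$ gives, for any nonzero $Z=\bigoplus Z_i\in\mathcal L_{A^\bot}$, that $Z_i\in\mathcal L_{\mathcal M_{n_i}(\KK_i)}$ for $i\neq j$ and $Z_j\in\mathcal L_{\mathcal S_j}$. Since $n_i\geq 2$, each $\mathcal M_{n_i}(\KK_i)$ satisfies $\mathcal L_{\mathcal M_{n_i}(\KK_i)}=\{0\}$ (using the description of $\mathcal L_\A^{\bot\bot}$ recalled after~\eqref{eq:matrixdecompositionR}), so $Z_i=0$ for $i\neq j$ and $Z_j\neq 0$. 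Thus $\mathcal S_j$ contains a nonzero left-symmetric point, and Lemma~\ref{technical1} forces $n_j=2$ and $\mathcal V=0$; combined with the codimension-$1$ relation, this yields $\KK_j=\FF$, proving the first assertion.

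For the moreover part, I would verify that $\mathcal L_{A^\bot}=\{0\oplus Z_j\oplus 0:Z_j\in\mathcal L_{\mathcal S_j}\}$. The nontrivial inclusion uses Proposition~\ref{prop:M_0(A)}: for $Y=\bigoplus Y_i\in A^\bot$ and $Z=0\oplus Z_j\oplus 0$ with $Z_j\in\mathcal L_{\mathcal S_j}$, both $Z\perp Y$ and (when $\|Y_j\|=\|Y\|$) $Y\perp Z$ reduce to the corresponding orthogonalities between $Z_j$ and $Y_j$ in $\mathcal M_2(\FF)$, while $Y\perp Z$ is automatic if $\|Y_j\|<\|Y\|$ since every norm-attaining vector of $Y$ then has vanishing $j$-component. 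Therefore
$$\bigcap_{Z\in\mathcal L_{A^\bot}}Z^\bot=\bigoplus_{i\neq j}\mathcal M_{n_i}(\KK_i)\oplus\bigcap_{Z_j\in\mathcal L_{\mathcal S_j}}Z_j^\bot,$$
and intersecting with $A^\bot$ collapses the $j$-th block to $\mathcal S_j\cap\bigcap_{Z_j\in\mathcal L_{\mathcal S_j}}Z_j^\bot=\{0\}$ by Lemma~\ref{technical2}. The remaining set $\bigoplus_{i\neq j}\mathcal M_{n_i}(\KK_i)\oplus 0$ equals $(0\oplus\mathcal M_{n_j}(\KK_j)\oplus 0)^\bot$ by the note preceding the corollary. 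The main obstacle is precisely this transfer of left-symmetry from $\mathcal S_j$ up to $A^\bot$ through Proposition~\ref{prop:M_0(A)}, including the careful case analysis on whether $Y$ attains its norm in the $j$-th block.
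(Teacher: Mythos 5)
Your proposal is correct and follows essentially the same route as the paper: normalize $A$ via unitaries so that the $j$-th block of $A^\bot$ becomes exactly the subspace $\mathcal T$ of Lemma~\ref{technical1}, use Lemma~\ref{coordinatewisesymmetricity} to confine left-symmetric points to that block, then invoke Lemma~\ref{technical1} to force $n_j=2$, $\mathcal V=0$ (hence $\KK_j=\FF$) and Lemma~\ref{technical2} for the ``moreover'' identity. The only cosmetic difference is that the paper first replaces $A$ by the rank-one $Ayy^\ast=0\oplus e_1e_1^\ast\oplus 0$ with the same outgoing neighbourhood, whereas you keep $A$ and normalize $A_je_1=\|A\|e_1$; your more explicit verification of $\mathcal L_{A^\bot}=\{0\oplus Z_j\oplus 0:\ Z_j\in\mathcal L_{\mathcal S_j}\}$ just fills in a step the paper leaves implicit.
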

    \begin{proof} Since $A$ is a smooth element, there exists $j$ such that $M_0(A)\subseteq
         0\oplus \KK_j^{n_j} \oplus 0$ and $\dim_{\KK_j}M_0(A)=1$.  Hence, there exists $ y_j\in\KK_j^{n_j}$ such that  $M_0(A)=0\oplus y_j\KK_j\oplus 0$ and so, if $y:=0\oplus y_j\oplus 0$, then, by the first part of Proposition \ref{prop:M_0(A)},
           $Ayy^\ast\in\A$ and $A\in\A$ share the same outgoing neighborhoods (i.e., $(Ayy^\ast)^\bot=A^\bot$).
     Hence we can assume with no loss of generality that $A=xy^\ast\in 0\oplus {\mathcal M}_{n_j}(\KK_j)\oplus 0$ for some column vectors $x,y\in\KK_j^{n_j}$ with $\|x\|=\|y\|=1$. Choose  unitary $U,V$ so that $Ux=e_1$ and $Vy=e_1$; since $X\mapsto UXV^\ast$ is isometry, hence a BJ-isomorphism, we can  assume that  $A=0\oplus(e_1e_1^\ast)\oplus 0$.

    If $n_j\geq 3$, then, by Lemma \ref{technical1}, $A^\bot$ has no non-zero left symmetric point, so $n_j\geq 3$ is not possible. Likewise if $n_j=2$ and $({\mathbb F},{\mathbb K})$ equals $({\mathbb R},{\mathbb C})$ or $({\mathbb R},{\mathbb H})$ (here, besides Lemma \ref{technical1}, we also use the fact that $0\oplus X_j\oplus 0\in\A$ is left-symmetric if and only if $X_j$ is left-symmetric in $\mathcal M_{n_j}(\mathbb K_j)$; see, e.g., Lemma \ref{coordinatewisesymmetricity}).
         So, the only possibility left is $n_j=2$ and $\mathbb K_j=\mathbb F$. For this case,  if  $Z\in A^\bot$ is left-symmetric, then, by Lemma~\ref{coordinatewisesymmetricity},
    it can be non-zero only in $j$-th summand. Hence, the proof follows by Lemma \ref{technical2}.
    \end{proof}

    Finally, we give a procedure to extract particular blocks of size $n$-by-$n$ for $n\geq 2$ in a finite-dimensional $C^*$-algebra. Let $\mathrm{sm}(\A)$ denote the set of all smooth points in $\A$.
     Given an integer ${\kappa}$ and a $C^*$-algebra $\mathcal B$, we define $$\mathcal P_{\kappa}(\mathcal B)=\{\boldsymbol{X}:=\{X_1,\dots,X_{\kappa}\}\subseteq \mathrm{sm}(\mathcal B);\;\; \mathcal L_{\boldsymbol{X}^\bot}:=\mathcal L_{\bigcap_{i=1}^{\kappa} X_i^\bot}\neq \{0\}\}.$$
         We remark that, if $\mathcal B$ is finite-dimensional, then, by Lemma \ref{coordinatewisesymmetricity}, there always exists ${\kappa}$ such that $\mathcal P_{\kappa}(\mathcal B)\neq \emptyset$.   In the lemma below, the minimal such $\kappa$ is denoted by $s$.

    \begin{lemma}\label{procedure} Let $\A$ be a finite-dimensional $C^*$-algebra with no pseudo-abelian direct summand
         and      let $A_1,\dots, A_s\in\mathrm{sm}(\A)$ be such that $\bigcap_{i=1}^s A_i^\bot$ contains a non-zero left-symmetric point with $s$ being the minimal integer to allow this.      Consider the following recursive procedure:
    \begin{align*}
    S_0&:=\{\boldsymbol{A}=\{A_1,\dots, A_s\}\},\\
    S_{m+1}&:=\big\{\boldsymbol{X}\in\mathcal P_s(\A);\;\; \exists \boldsymbol{Y}\in S_m \text{ such that }  |\boldsymbol{X}\cap \boldsymbol{Y}|\ge 1\},
    \end{align*}
         and define $\mathfrak S=\bigcap_{\boldsymbol{X}\in \bigcup_m S_m} \boldsymbol{X}^\bot$. Then, there exists index $j$ such that $M_0(A_i)\subseteq 0\oplus\KK_j^{n_j}\oplus 0$ for all $1\leq i\leq s$ and
    \begin{align}\label{eq987654}\mathfrak S\cap\bigg( \bigcap_{Z\in\mathcal L_{\mathfrak S}} Z^\bot
         \bigg)\cap \bigg(\bigcap_{Z\in\mathcal L_{\boldsymbol{A}^\bot}} Z^\bot\bigg)=(0\oplus \mathcal M_{n_j}(\mathbb K_j)\oplus 0)^\bot.
    \end{align}
                        \end{lemma}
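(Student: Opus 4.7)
The first conclusion, existence of the index $j$ with $M_0(A_i) \subseteq 0 \oplus \mathbb K_j^{n_j} \oplus 0$ for all $i$, is already contained in Lemma \ref{minimalsmooth}, so the real work is to establish \eqref{eq987654}. My plan is in four steps.

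\textbf{Structure of $\mathfrak S$.} By the ``moreover'' part of Proposition \ref{prop:M_0(A)} applied to each smooth $A_i$ (whose norm is attained uniquely on the $j$-th block), $A_i^\bot$ equals $\bigoplus_{k\neq j}\mathcal M_{n_k}(\mathbb K_k)\oplus\mathcal K_i$ for an $\mathbb F$-hyperplane $\mathcal K_i\subseteq\mathcal M_{n_j}(\mathbb K_j)$. I then argue by induction on $m$ that every $\boldsymbol X\in\bigcup_m S_m$ has the same associated block index $j$: since $\boldsymbol X\in S_{m+1}$ shares a smooth element with some $\boldsymbol Y\in S_m$, and a smooth element determines a unique block where its $M_0$ lives, the invariant propagates through the ``sharing'' graph from the seed $\boldsymbol A$. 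Combined with Lemma \ref{coordinatewisesymmetricity}, this gives $\boldsymbol X^\bot=\bigoplus_{k\neq j}\mathcal M_{n_k}(\mathbb K_k)\oplus\mathcal S_j^{\boldsymbol X}$ for each such $\boldsymbol X$ and hence $\mathfrak S=\bigoplus_{k\neq j}\mathcal M_{n_k}(\mathbb K_k)\oplus\mathcal S_j$ with $\mathcal S_j:=\bigcap_{\boldsymbol X}\mathcal S_j^{\boldsymbol X}$.

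\textbf{Left-symmetric elements sit in the $j$-th block.} Applying Lemma \ref{coordinatewisesymmetricity} to $\mathfrak S$ and to $\boldsymbol A^\bot$, and using that $\mathcal M_{n_k}(\mathbb K_k)$ contains no non-zero left-symmetric vector when $n_k\ge 2$ (this is~\cite[Lemma 2.2]{abelian}), I conclude that every $Z\in\mathcal L_{\mathfrak S}\cup\mathcal L_{\boldsymbol A^\bot}$ has the form $Z=0\oplus Z_j\oplus 0$ with $M_0(Z)\subseteq 0\oplus\mathbb K_j^{n_j}\oplus 0$.

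\textbf{Inclusion $\supseteq$.} For $B=\bigoplus_{k\neq j}B_k\oplus 0$, certainly $B\in\mathfrak S$ since $0\in\mathcal S_j$; and for any $Z=0\oplus Z_j\oplus 0$ with $M_0(Z)\subseteq 0\oplus\mathbb K_j^{n_j}\oplus 0$, every normalized $x=0\oplus x_j\oplus 0\in M_0(Z)$ satisfies $\langle Zx,Bx\rangle_{\mathbb F}=\langle Z_jx_j,0\rangle_{\mathbb F}=0$, so by Proposition~\ref{prop:M_0(A)} we get $Z\bot B$, i.e.\ $B\in Z^\bot$.

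\textbf{Inclusion $\subseteq$; the hard part.} Take $B=\bigoplus B_k$ in the LHS and assume $B_j\ne 0$; I aim for a contradiction. The first three steps reduce the problem to showing
$$\mathcal S_j\;\cap\;\bigcap_{W_j\in\mathcal L_{\mathcal S_j}}\!W_j^\bot\;\cap\;\bigcap_{Z_j\in\mathcal L_{\mathcal S_j^{\boldsymbol A}}}\!Z_j^\bot\;=\;\{0\}$$
inside $\mathcal M_{n_j}(\mathbb K_j)$. The base case $s=1$ is exactly Corollary~\ref{MnF} (there $\mathcal S_j^{\boldsymbol A}\simeq\mathcal V$ of Lemma~\ref{technical2}). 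For $s\ge 2$ the point of the recursive procedure is that as we enlarge the family of tuples $\boldsymbol X$ to all those reachable from $\boldsymbol A$, the subspace $\mathcal S_j$ shrinks to a form in which Lemma~\ref{technical2} (or a direct analogue of the argument establishing it) applies to exhibit enough left-symmetric vectors in $\mathcal L_{\mathcal S_j}\cup\mathcal L_{\mathcal S_j^{\boldsymbol A}}$ whose joint kernels miss every non-zero $B_j$. Concretely, for any non-zero $B_j\in\mathcal S_j$ one can choose a normalized $y\in\mathbb K_j^{n_j}$ with $B_jy\ne 0$ and promote the rank-one $(B_jy)y^*$ to a smooth $X\in\A$ which, together with $s-1$ elements drawn from $\boldsymbol A$, forms a tuple in $\mathcal P_s(\A)$ sharing an element with $\boldsymbol A$; the resulting left-symmetric witness inside $\mathcal L_{\mathfrak S}$ will not be orthogonal to $B$. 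Verifying that such completions indeed lie in $\mathcal P_s(\A)$ (so that $\mathcal L_{(\boldsymbol X)^\bot}\ne\{0\}$ survives the replacement) is the main technical obstacle, and will rely on the tight control we already have over $M_0(A_i)$ and on the structural lemmas~\ref{technical2}--\ref{technical1}.
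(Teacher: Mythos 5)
Your reduction is sound as far as it goes: the structure of $\mathfrak S$ as $\bigoplus_{k\neq j}\mathcal M_{n_k}(\mathbb K_k)\oplus\mathcal S_j$, the localization of all left-symmetric points of $\mathfrak S$ and of $\boldsymbol A^\bot$ to the $j$-th summand, and the inclusion $\supseteq$ all match what the paper does (the paper phrases the block-propagation through the sharing graph via Lemma~\ref{minimalsmooth} exactly as you do). But the proof has a genuine gap precisely at the point you flag yourself: for the inclusion $\subseteq$ you propose, given $B$ with $B_j\neq 0$, to complete the rank-one $(B_jy)y^\ast$ by $s-1$ members of $\boldsymbol A$ to a tuple $\boldsymbol X$ and to conclude $B\notin\mathfrak S$ from $X\not\perp B$. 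For this to work you must show $\boldsymbol X\in\mathcal P_s(\A)$, i.e.\ that $\mathcal L_{\boldsymbol X^\bot}\neq\{0\}$, and there is no reason an arbitrary such completion should satisfy this --- minimality of $s$ gives you no control over which $s$-tuples of smooth points admit a non-zero left-symmetric point in their joint orthogonal complement. Since this verification is the entire content of the hard direction, the proposal as written does not prove the lemma.

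The paper avoids this obstacle by never constructing new tuples from scratch. Instead it only uses images of the seed tuple $\boldsymbol A$ under BJ-isomorphisms of the form $X\mapsto UXV^\ast$ and $X\mapsto X^\ast$ chosen to \emph{fix one member} of the current tuple: the image tuple automatically lies in $\mathcal P_s(\A)$ (BJ-isomorphisms preserve smoothness and left-symmetricity), and automatically shares the fixed member with its predecessor, so it lands in the next $S_m$. After reducing to a single block $\mathcal M_n(\mathbb K)$ with $A_1=e_1e_1^\ast$ and $A_2=(e_1\alpha+e_2\beta)(e_1\gamma+e_2\delta)^\ast$, the accumulated orthogonality constraints from these isometry-orbits (together with the elementary fact \eqref{neweq} about unimodular $\mu$) force $e_i^\ast Xe_j=0$ for all $i,j$ in a case analysis on $\gamma,\delta$, with the degenerate case $\boldsymbol A\subseteq\mathbb Ke_1e_1^\ast$ handled by Lemmas~\ref{technical1} and~\ref{technical2}. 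If you want to salvage your route, you would need to replace your ad hoc completion by this orbit construction (or prove a separate lemma guaranteeing that your completions stay in $\mathcal P_s(\A)$, which is essentially as hard as the lemma itself).
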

    \begin{remark}\label{remark2.8}
    In the above procedure we have $s\ge 1$;  if $s=1$, then $S_m=S_0=\{\{A_1\}\}$ for all~$m$ and $\mathfrak{S}=A_1^\bot$. Recall  also that the right-hand side,  $(0\oplus \mathcal M_{n_j}(\mathbb K_j)\oplus 0)^\bot$ coincides with  all the elements of $\mathcal A$ which vanish at $j$-summand.
         \end{remark}
    \begin{proof} The case $s=1$ was proved in Corollary \ref{MnF}. In the sequel we assume $s\ge 2$. We note that by Lemma \ref{minimalsmooth}, there exists index $j$ such that \begin{equation}\label{recursivenorm}M_0(A_i)\subseteq 0\oplus\KK_j^{n_j}\oplus 0
         \qquad\text{ for all } 1\leq i\leq s.\end{equation} Let $X\in \boldsymbol{X}\in S_1$. So, there exists $A_i\in \boldsymbol{A}\in S_0$ and smooth elements  $X_3,\dots, X_s$ such that $X^\bot\cap A_i^\bot\cap \bigcap_{i=3}^s X_i^\bot$ contains a non-zero left-symmetric point. Now, by~\eqref{recursivenorm} and Lemma~\ref{minimalsmooth}, we get  $$M_0(X)\subseteq 0\oplus \mathbb K_j^{n_j}\oplus 0.$$
         Recursively, this holds for all $X\in\boldsymbol{X}\in S_m$, $m\in{\mathbb N}$. Consequently,  in \eqref{eq987654}, the set  $\mathfrak{S}=\bigcap_{\boldsymbol{X}\in \bigcup_m S_m} \boldsymbol{X}^\bot$  contains all elements in $\A$  which vanish at   $j$-th summand, that is, it contains   the set on the right-hand side. As such, if  $Z\in \mathfrak{S}$
    or $Z\in \boldsymbol{A}^\bot$
    is (relative) left-symmetric, then, by Lemma~\ref{coordinatewisesymmetricity},  it can be non-zero only in $j$-th summand. Then, the left-hand side of~\eqref{eq987654}  contains   the set on the right-hand side.

    To prove they are equal we only have to show that the $j$-summand of the left-hand side vanishes.  The basic idea is to start with a given $s$-tuple  $\boldsymbol{A}$ and modify it with a
    BJ-isomorphism
         that fixes one element within an $s$-tuple. Then keep repeating~it. These recursively obtained $s$-tuples will have the same properties as the original one. Moreover, the fixed member achieves its norm only in the  $j$-th block so  the whole $s$-tuple will achieve the norm only in $j$-th block. So without loss of generality, we can assume $\A=\mathbb M_{n}(\mathbb K)$ consists of a single summand (note $n\ge 2$ because $\A$ does not contain pseudo-abelian summand).
    We will use BJ-isomorphisms induced by (conjugate)linear isometries (i) $X\mapsto UXV^\ast$ for  unitary $U,V$~or~(ii)~$X\mapsto X^\ast$.

    We will also use the following two facts:
         \begin{equation*}
        xy^\ast \perp B\Longleftrightarrow \langle By,x\rangle_{\FF}=0.
             \end{equation*}
                   \begin{equation}\label{neweq}
    \langle x+ y\mu, z\rangle_\FF=0 \hbox{ for all unimodular }\mu\in\KK\Longleftrightarrow\langle x, z\rangle_\FF=0\text{ and }\langle y, z\rangle_\KK=0.\end{equation}
     The first one follows by Proposition~\ref{prop:M_0(A)}.  The second one
    is clear if $\KK=\FF\in\{\RR,\CC\}$. In the remaining cases $(\FF,\KK)\in\{(\RR,\CC),(\RR,\HH)\}$  we note $0=\langle x+y\mu , z\rangle_\FF = \langle x, z\rangle_\FF+\mathrm{Re}(z^\ast y\mu)$, so choosing $\mu$ such that $(z^\ast y)\mu$ is purely imaginary number, we get $\langle x, z\rangle_\FF=0$. Choosing next $\mu=\frac{\overline{z^\ast y}}{|z^\ast y|}$, we get $|\langle y,z\rangle_\KK|=0$, i.e. $\langle y,z\rangle_\KK=0$.

    As in the proof of Corollary \ref{MnF}, we can assume  $A_i=x_iy_i^\ast$ for some normalized  column vectors $x_i,y_i\in\KK^{n}$ and  $A_1=e_1e_1^*$.  We now consider several cases.

    \textbf{Case 1:}  $A_1,\dots,A_s\in \KK e_1e_1^\ast$. By Lemma \ref{technical1} this  is only possible for $n=2$ in which case Lemma \ref{technical2} implies  ${\mathcal V}:=\bigcap_{i=1}^k A_i^\bot=\left(\begin{smallmatrix}
        0&\mathbb K\\ \mathbb K&\mathbb K
    \end{smallmatrix}\right)$ and
    ${\mathcal V}\cap \bigcap_{Z\in{\mathcal L}_{\mathcal V}}Z^\bot=\{0\}$. This gives \eqref{eq987654}.

    \textbf{Case 2:}  $A_i\notin \KK e_1e_1^\ast$ for all $i\neq 1$.  After rearranging we can assume that $A_2=x_2y_2^\ast$ is such that at least one among $x_2,e_1$ and $y_2,e_1$ are $\KK$-linearly independent. By applying suitable unitaries which fix $e_1$ we can achieve that
    $$x_2=e_1 \alpha+e_2\beta\quad \hbox{ and }\quad y_2=e_1\gamma+e_2\delta$$  where $\alpha= e_1^\ast x_2$ and $\gamma=e_1^\ast y_2 $ while $\beta,\gamma\in\KK$ are arbitrary as long as    $|\alpha|^2+|\beta|^2=|\gamma|^2+|\delta|^2=\|x\|^2=\|y\|^2=1$. If needed we apply the conjugate-linear isometry $X\mapsto X^\ast$, which also fixes $A_1=e_1e_1^\ast$, to achieve that $\beta\neq0$. Moreover, if $n=2$ and ${\boldsymbol A}\not\subseteq \KK e_1e_1^\ast\cup \KK e_2e_2^\ast\cup \KK e_2e_1^\ast\cup \KK e_1e_2^\ast$, we can further select $A_2$ so that also $\alpha\neq0$.
     We can now start the procedure.
         There exist unitaries $U,V$, which fix $e_1$ and map $e_2\beta$ into $ e_i\beta\mu$ for any given  $i\in\{2,\dots,n\}$  and any given unimodular $\mu\in\KK$. Applying the corresponding isometries $X\mapsto UXV^\ast$ on initial $s$-tuple  $\{A_1,A_2,\dots\}\in S_0$ and considering the images of $A_2$ gives that
    \begin{equation}\label{eq:Aij(mu)}
     \bigl\{A_1\,,\,A_{ij}^{\pm}(\mu):=(e_1\alpha+e_i\beta\mu)(e_1\gamma\pm e_j\delta)^\ast\,,\dots\bigr\}\in {S}_1
     \end{equation}
     for every $i,j\ge 2$ and every unimodular $\mu\in\KK$. We  also choose a unitary $U=(e_1\alpha+e_2\beta)(e_1\alpha+e_2\beta)^\ast +(-e_1|\beta|\mu +e_2\beta'\bar{\alpha}\mu)(-e_1|\beta| +e_2\beta'\bar{\alpha})^\ast +\sum_3^n e_ie_i^\ast$ (here, $\beta':=\mathop{\mathrm{sgn}}_0(\beta)$     ); the  isometry $X\mapsto UX$  fixes $A_2$ and maps the initial $s$-tuple into
     \begin{equation}\label{eq:A1(mu)}
         \bigl\{A_1(\mu)=(e_1(| \alpha |^2+\mu  | \beta |^2)+e_2\beta  \bar{\alpha}(1-\mu))e_1^\ast\,,\, A_2,\dots\bigr\}\in {S}_1.
     \end{equation}
    Likewise there exists an isometry $X\mapsto XV_{\pm}^\ast$
         which fixes $A_{2j}^{\pm}(\mu)\in S_1$ and maps the $s$-tuple from~\eqref{eq:Aij(mu)}  into
    $$\bigr\{A_1'(\mu)=e_1(e_1(| \gamma|^2+\overline{\mu}  | \delta |^2)\pm e_j(1-\overline{\mu}){\gamma  \bar{\delta}})^\ast\,,\, A_{2j}^{\pm}(\mu),\dots\}\in S_2$$
    In particular, if $X\in (S_0\cup S_1\cup S_2)^\bot$, then $A_{ij}^{\pm}(\mu) \perp X$ so that
    $\langle e_1\alpha+e_i\beta\mu,X(e_1\gamma\pm e_j\delta)\rangle_{\FF}=0$ for every unimodular $\mu$. Since $\beta\neq0$ we get from ~\eqref{neweq} that
    $e_i^\ast X(e_1\gamma\pm e_j\delta)=0$ for $i,j\ge 2;$
         and then
    \begin{equation}\label{eq:eiXej}
        e_i^\ast X e_1\gamma=e_i^\ast X e_j\delta=0;\qquad i,j\ge 2
    \end{equation}

    $\bullet$ Hence, if $\gamma\delta\neq0$, then $X\in (S_0\cup S_1\cup S_2)^\bot$ maps every vector into $e_1\KK$ and since also $A_1(\mu)\perp X$ and $A_1'(\mu)\perp X$ we further have $e_1^\ast Xe_1=0$ and then $e_1^\ast Xe_j=0$, giving $X=0$.

    $\bullet $ If $\delta=0$, then $|\gamma|=1$ and \eqref{eq:eiXej} reduces to $e_i^\ast Xe_1=0$ ($i\ge 2$) for $X\in(S_0\cup S_1\cup S_2)^\bot$. The matrix $A_1(\mu)$ from \eqref{eq:A1(mu)} is again in $S_1$ and hence  $A_1(\mu)\perp X$ for each unimodular $\mu$  gives $e_1^\ast Xe_1=0$. Hence $Xe_1=0$.

    Here the initial $s$-tuple is  $\{A_1=e_1e_1^\ast, A_2=(e_1\alpha+e_2\beta)e_1^\ast,\dots\}\in S_0$, and by applying isometries  $X\mapsto UX$ and $X\mapsto (UX)^\ast$, which both  fix $A_1$ (so unitary $U$ fixes $e_1$ and maps $e_2\beta$ into $e_i\beta\mu$),   we further get
    $$\{A_1, (e_1\alpha+e_i\beta \mu) e_1^\ast,\dots\}\,,\,\{A_1, e_1(e_1\alpha+e_i\beta \mu)^\ast,\dots\}\in S_1$$
    and so $e_1^\ast X e_i=0$ for $i\ge 2$. Therefore, $X\in0\oplus {\mathcal M}_{n-1}(\KK)$. We now choose unitary $U$ with $Ue_1=e_1\alpha+e_i\beta $  and apply the isometry $X\mapsto UX$ on the second of the above $s$-tuples. Notice that this isometry may fix none of its elements, but the resulting $s$-tuple
    $$\bigl\{(e_1\alpha+e_i\beta)e_1^\ast\,,\, (e_1\alpha+e_i\beta)(e_1\alpha+e_i\beta)^\ast,\dots\bigr\}$$ will still have the same properties as the original one  and will intersect the first of the above two  $s$-tuples in  a matrix $     (e_1\alpha+e_i\beta)e_1^\ast$. Thus, according to our procedure, the obtained $s$-tuple belongs to $S_2$. We now further change it by an isometry $X\mapsto XV^\ast$, where $V$ fixes $e_1$ and maps $e_i\beta$ into $ e_j\beta\mu$ to obtain an $s$-tuple
    $$\bigl\{(e_1\alpha+e_i\beta)e_1^\ast\,,\, (e_1\alpha+e_i\beta)(e_1\alpha+e_j\beta\mu)^\ast,\dots\bigr\}\in S_3$$
    Thus, $(e_1\alpha+e_i\beta)(e_1\alpha+e_j\beta\mu)^\ast \perp X$ for every unimodular $\mu$ and index $j\ge 2$. Since we already known that $Xe_1=0$ and $e_1^\ast Xe_j=0$, and since $\beta\neq0$ we get $e_i^\ast X e_j=0$ for $i,j\ge 2$, and hence $X=0$, as claimed.

    $\bullet$  If $\gamma=0$, then $|\delta|=1$.  Here, \eqref{eq:eiXej} reduces into $e_i^\ast Xe_j=0$ ($i,j\ge2$)   for $X\in(S_0\cup S_1\cup S_2)^\bot$ and      the $s$-tuple~\eqref{eq:Aij(mu)} simplifies  into
    \begin{equation}\label{eq:gamma=0}
        \bigl\{A_1= e_1e_1^\ast \,,\, A_{2j}(1)=(e_1\alpha+e_2\beta)e_j^\ast,\dots\}\in S_1;\qquad j\ge 2.
    \end{equation}
                             We  change it with isometry $X\mapsto XV^\ast$,  which fixes $A_{2j}(1)$, into
    $$\bigl\{\mu e_1e_k^\ast , A_{2j}(1),\dots\}\in S_2;\qquad k\neq j.$$
    We further apply isometry $X\mapsto X^\ast$ which fixes the first entry of  $s$-tuple \eqref{eq:gamma=0}, and then apply $X\mapsto U^\ast X$ which fixes $A_{2j}(1)^\ast$ to get an $s$-tuple
    $$\bigl\{\mu e_ke_1^\ast , A_{2j}(1)^\ast,\dots\}\in S_3;\qquad k\neq j.$$
    Again, $X\in (S_0\cup S_1\cup S_2\cup S_3)^\bot$ then satisfies $(\mu e_1e_k^\ast)\perp X$ and $(\mu e_ke_1^\ast)\perp X$. Together with \eqref{eq:eiXej} this gives  $$e_i^\ast X e_j=e_1^\ast X e_k=e_k^\ast X e_1=0 \quad \hbox{ for }i,j\ge 2 \hbox{ and }k\neq j.$$ When $n\ge 3$ this is clearly  equivalent to $X=0$.  When $n=2$ it is equivalent to  $e_k^\ast Xe_k=0$ ($k=1,2$). In this case if $\alpha\neq0$, then, by suitable isometry $X\mapsto XV^\ast$ and the adjoint map, which both fix $A_1$, we change the $s$-tuple \eqref{eq:gamma=0} into the one containing $(e_1\alpha+e_2\beta)(e_j\mu)^\ast$ and its adjoint, and it then  follows easily that  $\overline{\alpha} e_1^\ast Xe_j=e_j^\ast Xe_1\alpha=0$, that is, $X=0$.  However, if $n=2$ and $\alpha=0$ then, as explained at the beginning (and since $\gamma=0$), ${\boldsymbol A}=\{ E_{11},\mu_2 E_{22},\dots\}\subseteq \bigcup_{i,j} \KK e_ie_j^\ast \subseteq {\mathcal M}_2(\KK)$. 
    Notice that   the isometry $X\mapsto \left(\begin{smallmatrix}
        \mu & 0\\
        0 & 1
    \end{smallmatrix}\right)X$  fixes the second element of ${\boldsymbol A}$   and transforms ${\boldsymbol A}$  into $\{\mu E_{11},\mu_2 E_{i_0j_0},\dots\}\in S_2$. We easily deduce that either $\mathfrak{S}\subseteq\left(\begin{smallmatrix}
        0 & \KK\\
        0 & \KK
    \end{smallmatrix}\right)$, where every element is left-symmetric so $ \bigcap_{Z\in\mathcal L_{\mathfrak S}} Z^\bot
    =0$, or  $\mathfrak{S}\subseteq\left(\begin{smallmatrix}
        0 & \KK\\
        \KK & 0
    \end{smallmatrix}\right)$.
    In the latter case we reduce to one of previous (sub)cases except when  every $\boldsymbol X\in\bigcup S_i$ is contained in $\KK E_{11}\cup \KK E_{22}$ in which case $\mathfrak{S}=\left(\begin{smallmatrix}
        0 & \KK\\
       \KK& 0
    \end{smallmatrix}\right) $
    and  each element lying in a single block of $\mathfrak{S}$ is left-symmetric. Hence, yet again
          $\bigcap_{Z\in\mathcal L_{\mathfrak S}} Z^\bot
         =0 $ which completely verifies~\eqref{eq987654}.
    \end{proof}

    \textit{Proof of Theorem \ref{maintheorem1}}. Let $\phi\colon \mathcal A_1\rightarrow\mathcal A_2$ be a BJ-isomorphism and let $\mathcal A_1$ be  finite-dimensional  with  $\dim \A_1\ge2$.   Then, by  \cite[equation~(1.2)]{guterman},  $\mathop{\mathrm{dim}}\mathcal A_1=\mathop{\mathrm{dim}}\mathcal A_2$. If $\A_1$ is a pseudo-abelian $C^\ast$-algebra, then as proved in \cite[Theorem 1.1]{abelian},  $\A_1$ and $\A_2$ are $C^*$-isomorphic with $\FF_1=\FF_2$. So, assume $\A_1$ and $\A_2$ are not pseudo-abelian $C^*$-algebras.

    Without loss of generality, let $\mathcal A_1=\bigoplus_{i=1}^{\ell_1} \mathcal M_{n_i^1}(\mathbb K_i^1)$ and $\mathcal A_2=\bigoplus_{j=1}^{\ell_2} \mathcal M_{n_j^2}(\mathbb K_j^2)$.
         Now since $\phi$ is BJ-isomorphism, we have  \begin{equation}\label{isomorphism1}\phi(\mathcal L_{\A_1}^{\bot})=\mathcal L_{\A_2}^{\bot}\quad \text{ and }\quad \phi(\mathcal L_{\A_1}^{\bot\bot})=\mathcal L_{\A_2}^{\bot\bot}.\end{equation} By \cite[Theorem 1.2]{abelian}, we have that  $\mathcal L_{\A}^{\bot\bot}$
    is the pseudo-abelian summand (i.e., a  sum of its $1$-by-$1$ matrix block constituents) in any finite-dimensional $C^*$-algebra $\A$, while  $\mathcal L_{\A}^{\bot}$ is the nonpseudo-abelian summand (i.e., a  sum of its $n$-by-$n$ matrix blocks with $n\geq 2$).

    Therefore, by  \eqref{isomorphism1}, $\phi$ induces  BJ-isomorphism between pseudo-abelian summands of $\A_1$ and $\A_2$ and it also induces BJ-isomorphism between their nonpseudo-abelian summands. 
    
    Since we assume $\A_1$ and $\A_2$ are not pseudo-abelian $C^*$-algebras, then $\mathcal B_1=\mathcal L_{\A_1}^\bot$ and $\mathcal B_2=\mathcal L_{\A_2}^\bot$ are non-zero and BJ-isomorphic $C^*$-subalgebras. Using Lemmas~\ref{coordinatewisesymmetricity} and  \ref{minimalsmooth}, there exists a minimal number $s\ge1$ such that we can find $s$ smooth elements $A_1,\dots, A_s\in\mathcal B_1$ such that $\bigcap_i A_i^\bot$ contains a non-zero left-symmetric element inside $\mathcal B_1$. Now, smooth points  and left-symmetricity are both defined in terms of BJ orthogonality so  are both preserved under BJ-isomorphism. Therefore,  $\phi(A_1),\dots,\phi(A_s)\in{\mathcal B_2}$ are smooth points  and $\bigcap_i \phi(A_i)^\bot=\phi(\bigcap_i A_i^\bot)$ contains a non-zero left-symmetric element (in $\mathcal B_2$). The same arguments work for $\phi^{-1}$ and   prove that $s$ is also the minimal number of smooth points needed in $\mathcal B_2$ which satisfies this condition.

    Now applying the procedure mentioned in Lemma \ref{procedure}, which is  defined  completely in terms of BJ orthogonality alone (and in view of Remark \ref{remark2.8}) there exist indices $j_1,j_2$ such that  $\phi$ maps
    $$0\oplus {\mathcal M}_{n_{j_1}^1}(\KK_{j_1}^1)\oplus 0=\bigg(\mathfrak S\cap\big( \bigcap_{Z\in\mathcal L_{\mathfrak S}} Z^\bot
         \big)\cap \big(\bigcap_{Z\in\mathcal L_{\boldsymbol{A}^\bot}} Z^\bot\big)\bigg)^
    \bot\subseteq{\mathcal B_1}$$ bijectively onto $0\oplus {\mathcal M}_{n_{j_2}^2}(\KK_{j_1}^2)\oplus 0\subseteq{\mathcal B_2}$. Using \cite[Theorem 1.1]{simple}, we get that $\FF_1=\FF_2$ and $\mathcal M_{n_{j_1}^1}(\mathbb K_{n_{j_1}}^1)=\mathcal M_{n_{j_2}^2}(\mathbb K_{n_{j_2}}^2)$.

         Further, $\phi$ maps $\mathcal B_1'=
    (0\oplus\mathcal M_{n_{j_1}^1}(\mathbb K_{j_1}^1)\oplus0)^\bot\subseteq \mathcal B_1$ bijectively onto $\mathcal B_2'=
    (0\oplus\mathcal M_{n_{j_2}^2}(\mathbb K_{j_1}^2)\oplus0)^\bot\subseteq \mathcal B_2$. If $\mathcal B_1'=0$, then $\mathcal B_2'=0$ and we get that the nonpseudo-abelian summands of $\mathcal A_1$ and $\mathcal A_2$ are same. Otherwise, $\phi$ induces a BJ-isomorphism between $C^*$-algebras $\mathcal B_1'$ and $\mathcal B_2'$.

    This allows us to apply the procedure from Lemma \ref{procedure} on $\mathcal B_1'$ and $\mathcal B_2'$ to extract one further block from the matrix block decomposition of $\mathcal B_1'$, which also belongs $\mathcal B_2'$. Then, proceeding recursively, we get the blocks in the matrix block decomposition of $\mathcal B_1$ and $\mathcal B_2$ are same, upto permutation.
              It implies that nonpseudo-abelian summands of $\A_1$ and $\A_2$ are $\ast$-isomorphic.

    Finally, since, $\mathbb F_1=\mathbb F_2$, using \cite[Theorem 1.1]{abelian}, the pseudo-abelian summands of $\A_1$ and $\A_2$ are also $\ast$-isomorphic. It implies $\A_1$ and $\A_2$ are $\ast$-isomorphic.
         \qed

    The above proof of Theorem \ref{maintheorem1} shows even more, which we state in terms of minimal ideals and their centralizers (see a book by Grove~\cite{grove}). The minimal ideals in \eqref{eq:matrixdecompositionC} and \eqref{eq:matrixdecompositionR} are the matrix blocks ${\mathcal M}_{n_i}(\KK_i)$ and their centralizers are~$\KK_i$.

         \begin{corollary}
        Let $\phi\colon \A_1\rightarrow\A_2$ be a BJ-isomorphism between $C^*$-algebras $\A_1$ and $\A_2$ over the fields $\FF_1$ and $\FF_2$, with $2\le \dim \A_1<\infty$. Then, by identifying $\A_1=\A_2$, $\phi$ permutes the  minimal ideals which share the same centralizer and  dimension.
                   \end{corollary}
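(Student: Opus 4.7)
The plan is to unpack the proof of Theorem~\ref{maintheorem1}, which actually produces a block-by-block correspondence, and then to identify $\A_1=\A_2$ via the $\ast$-isomorphism furnished by that theorem. Under this identification, the block correspondence induced by $\phi$ becomes the desired permutation of the minimal ideals.

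First I would split $\A_1$ into its pseudo-abelian summand $\mathcal L_{\A_1}^{\bot\bot}$ and its nonpseudo-abelian summand $\mathcal L_{\A_1}^\bot$. By~\eqref{isomorphism1} any BJ-isomorphism preserves this decomposition, so it suffices to establish the claim separately on each summand. For the nonpseudo-abelian part, I would invoke Lemma~\ref{procedure} recursively as in the proof of Theorem~\ref{maintheorem1}: at each stage it isolates a single minimal ideal $\mathcal M_{n_{j_1}}(\mathbb K_{j_1})\subseteq \A_1$ whose image under $\phi$ is exactly a single minimal ideal $\mathcal M_{n_{j_2}}(\mathbb K_{j_2})\subseteq \A_2$. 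Restricting $\phi$ to this pair of blocks and applying \cite[Theorem 1.1]{simple} forces $\mathbb K_{j_1}=\mathbb K_{j_2}$ and $n_{j_1}=n_{j_2}$, i.e.\ matching centralizers and dimensions. Peeling off these two blocks and iterating the procedure on the complementary subalgebras $\mathcal B_1'=(0\oplus\mathcal M_{n_{j_1}}(\mathbb K_{j_1})\oplus 0)^\bot$ and $\mathcal B_2'=(0\oplus\mathcal M_{n_{j_2}}(\mathbb K_{j_2})\oplus 0)^\bot$ exhausts all nonpseudo-abelian blocks.

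For the pseudo-abelian summand I would appeal to the block-wise analysis already carried out in the proof of \cite[Theorem 1.1]{abelian}, which separates the $\mathbb R$-, $\mathbb C$- and (in the real case) $\mathbb H$-type $1$-by-$1$ blocks purely from BJ orthogonality data; there too $\phi$ induces a bijection between $1$-by-$1$ blocks of matching centralizer. Assembling the two bijections and identifying $\A_1=\A_2$ via the $\ast$-isomorphism from Theorem~\ref{maintheorem1} yields the claimed permutation.

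The main obstacle is confirming that the extraction procedure of Lemma~\ref{procedure} is genuinely compatible with the recursion: one must check that after peeling off the pair $(\mathcal M_{n_{j_1}}(\mathbb K_{j_1}),\mathcal M_{n_{j_2}}(\mathbb K_{j_2}))$, the restriction $\phi\colon \mathcal B_1'\to \mathcal B_2'$ is still a BJ-isomorphism between nonpseudo-abelian $C^*$-algebras to which the lemma again applies with the correct value of $s$. Because Lemma~\ref{procedure}, and hence the extracted ideal, is phrased entirely in terms of the BJ-orthogonality relation (smooth points, left-symmetric points, outgoing neighbourhoods), $\phi$ automatically intertwines the procedure on $\A_1$ with the one on $\A_2$, and this compatibility propagates through the recursion; the argument is essentially bookkeeping on top of the proof of Theorem~\ref{maintheorem1}.
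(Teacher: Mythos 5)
Your treatment of the nonpseudo-abelian summand matches the paper exactly: split off $\mathcal L_{\A_1}^{\bot}$ via \eqref{isomorphism1}, run Lemma~\ref{procedure} recursively to isolate one block at a time, and use \cite[Theorem 1.1]{simple} on each isolated pair. That half of the argument is fine and is precisely what the paper does (indeed the paper just points back to the proof of Theorem~\ref{maintheorem1} for it).

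The gap is in the pseudo-abelian summand. You ``appeal to the block-wise analysis already carried out in the proof of \cite[Theorem 1.1]{abelian}'' and assert that ``there too $\phi$ induces a bijection between $1$-by-$1$ blocks of matching centralizer.'' But what \cite[Theorem 1.1]{abelian} delivers is an isomorphism classification obtained from numerical BJ-invariants (the minimal $s$ and the count $\aleph$ of distinct outgoing neighbourhoods); knowing that $\A_1\cong\A_2$ as $C^*$-algebras does not by itself show that $\phi$ carries each individual $1$-by-$1$ block onto a single $1$-by-$1$ block. For that you must exhibit each block as a BJ-definable subset, and this is exactly where the paper does new work: it builds a fresh extraction procedure (parallel to Lemma~\ref{procedure}) which, starting from the minimal family $A_1,\dots,A_s$ of left-symmetric points making $\{A^\bot;\,A\in\mathcal L_{\bigcap A_i^\bot}\}$ finite, uses \cite[Lemmas 3.2, 3.4, 3.5]{abelian} to decide which left-symmetric generators lie in the same block --- in particular to distinguish whether a given $A_1$ sits in a complex block or shares a quaternionic block with some other $A_i$, and to separate two distinct blocks of the same type (e.g.\ the two copies of $\RR$ in $\RR^2$, told apart by their outgoing neighbourhoods). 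None of this is supplied by your citation, so the pseudo-abelian half of your argument is missing its key construction. To repair it you would need to either verify that the proof of \cite[Theorem 1.1]{abelian} literally characterizes each block in BJ terms (it is the counting of blocks, not their individual extraction, that is recorded there), or reproduce the block-extraction procedure the present paper writes out.
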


    \begin{proof} In the proof of Theorem \ref{maintheorem1}, we already showed  this  for nonpseudo-abelian summands of $\A_1$ and $\A_2$ (that is $\phi$ induces a permutation among  the blocks of size bigger than one in \eqref{eq:matrixdecompositionC}--\eqref{eq:matrixdecompositionR}). Furthermore, we also  showed that pseudo-abelian summand of $\A_1$ is mapped onto pseudo-abelian summand of $\A_2$. So, without loss of generality we can assume $\A_1$ and $\A_2$ are pseudo-abelian $C^\ast$-algebras. We now give a procedure (in parallel to Lemma~\ref{procedure}) which will  extract all elements within a single  block of a  finite-dimensional pseudo-abelian $C^*$-algebra $\A$ and will be  defined completely in terms of BJ orthogonality.

    To start, let $s\ge0 $ be the minimal number such that there exists non-zero left symmetric points $A_1,\dots, A_s$  such that $\{A^\bot;\;\; A\in\mathcal L_{\bigcap_{i=1}^sA_i^\bot}\}$ is a finite set (see \cite[Lemma 3.5]{abelian}).  If $s=0$, then by \cite[Lemma 3.2]{abelian} $\A=\FF^n$ and left-symmetric elements with the same outgoing neighbourhoods constitute the same block.

    Assume $s\ge 1$. Then, $\A$ is a real $C^*$-algebra and $\A=\RR^r\oplus\CC^c\oplus\HH^h$ with $(c,h)\neq(0,0)$,  each left-symmetric $A_i$ belongs to one of the complex or quaternionic blocks, and $\bigcap_{i=1}^sA_i^\bot$ is isometrically isomorphic to $\RR^{r+c+h}$ (see \cite[Lemmas 3.2 and 3.5]{abelian}).
    In this case, we remove $A_1$ and collect all left-symmetric $X$ such that $\{A^\bot;\;\; A\in\mathcal L_{X^\bot\cap \big(\bigcap_{i=2}^sA_i^\bot\big)}\}$ is a finite set.
         It follows from \cite[Lemma 3.5]{abelian} that all these $X$ share the same  block with $A_1$. Remove, if possible, $A_i$ for $i\geq 2$ such that $\{A^\bot;\;\; A\in\mathcal L_{X^\bot\cap A_1^\bot\cap(\bigcap_{j\not\in\{1,i\}}A_j^\bot)}\}$ is a finite set. If, for no $X$ from our collection, there is  such $A_i$, then~$A_1$ belongs  to a complex block by \cite[Lemma 3.4]{abelian}. Otherwise, all these $A_i$  share the same (quaternionic) block with $A_1$, again by \cite[Lemma 3.5]{abelian}. Collect all such $A_i$ and the corresponding $X$ obtained when the above procedure is started with  one of  them in place of $A_1$. It is an exercise that this collection    coincides with all  the non-zero elements from  the block containing~$A_1$ (see~\cite[Lemma 3.4 and its~proof]{abelian}).

    After applying this procedure we get a collection $\Omega$ consisting of all the non-zero elements from one block of $\A$. To extract other blocks, we apply it again  on a $C^*$-subalgebra $\Omega^\bot$ and proceed recursively.
                   \end{proof}

    \end{document}